\DeclareMathOperator{\tr}{tr}
\DeclareMathOperator{\diag}{diag}
\def\R{{\mathbb{R}}}
\def\N{{\mathbb{N}}}
\def\Sc{{\mathcal{S}}}
\def\theta{{\vartheta}}
\def\phi{{\varphi}}
\def\epsilon{{\varepsilon}}
\long\def\umbruch{{\displaybreak[1]}}
\def\eg{e.\,g.\ }
\def\ie{i.\,e.\ }
\newcommand{\A}[1]{\ifthenelse{#1 = 2}{\lvert A\rvert^{#1}}{\tr A^{#1}}}
\renewcommand{\vec}[1]{\mathbf{#1}}
\mathchardef\ordinarycolon\mathcode`\:
\newtheorem{theorem}{Theorem}[section]
\newtheorem{lemma}[theorem]{Lemma}
\newtheorem{corollary}[theorem]{Corollary}
\theoremstyle{definition}
\newtheorem{definition}[theorem]{Definition}
\newtheorem{example}[theorem]{Example}
\newtheorem{remark}[theorem]{Remark}
\numberwithin{equation}{section}
\begin{document}
\title[Pinched hypersurfaces]{Pinched hypersurfaces shrink to round points}

\author{Martin Franzen}
\address{Martin Franzen, Universit\"at Konstanz,
   Universit\"atsstrasse 10, D-78464 Konstanz, Germany}
\curraddr{}
\def\ukaddress{@uni-konstanz.de}
\email{Martin.Franzen\ukaddress}

\address{Martin Westerholt-Raum, Max Planck Institute for Mathematics,
   Vivatsgasse 7, D-53111 Bonn, Germany}
\curraddr{}
\def\raumaddress{@raum-brothers.eu}
\email{Martin\raumaddress}

\address{Ferdinand Kuhl, Digital Competence,
   Annakirchstrasse 190, D-41063 M\"onchengladbach, Germany}
\curraddr{}
\def\kuhladdress{@digital-competence.de}
\email{F.Kuhl\kuhladdress}

\thanks{We would like to thank F. Kuhl, B. Lambert, M. Langford, M. Makowski, O. Schn\"urer,}
\thanks{W. Stein, B. Stekeler, and M. Westerholt-Raum for discussions and support.}
\thanks{The author is a member of the DFG priority program SPP 1489.}

\subjclass[2000]{53C44}

\date{February 27, 2015.}

\dedicatory{With an appendix by M. Franzen, M. Westerholt-Raum, F. Kuhl.}

\keywords{}

\begin{abstract}
  We investigate the evolution of closed strictly convex hypersurfaces in $\R^{n+1}$, $n=3$,
  for contracting normal velocities, including powers 
  of the mean curvature, $H$, 
  of the norm of the second fundamental form, $|A|$,
  and of the Gauss curvature, $K$.
  We prove convergence to a round point
  for $2$-pinched initial hypersurfaces.
  In $\R^{n+1}$, $n=2$, natural quantities 
  exist for proving convergence to a round point for many normal velocities.
  Here we present their counterparts for arbitrary dimensions $n\in\N$.
\end{abstract}

\maketitle


\tableofcontents 

\section{Overview}

We consider the geometric flow equation
\begin{align}\label{eq3:evol}
  \begin{cases}
    \frac{d}{dt} X=-F\nu,&\\
    X(\cdot,0)=M_0
  \end{cases}
\end{align}
and ask whether closed strictly convex hypersurfaces $M_{0\leq t < T}$ in $\R^{n+1}$, $n=3$, shrink to round points.

For the cubed mean curvature, $F=H^3$, the answer is affirmative if the initial hypersurface $M_0$ is $2$-pinched, \ie
the principal curvatures $\left(\lambda_i\right)_{1\leq i\leq 3}$ fulfill
\begin{align*}
  \frac{\lambda_i}{\lambda_j} \leq 2
\end{align*}
everywhere on $M_0$ for all $1\leq i,j\leq 3$. This is our main Theorem \ref{thm3:H3}. 

Furthermore, we sketch the proof of similar results for the square of the norm of the second fundamental form, $F=|A|^2$, 
and the Gauss curvature, $F=K$.

So far, strong pinching assumptions were needed to show convergence to a round point \cite{am:convex,bc:deforming,fs:convexity}. \\

\textbf{The paper is structured as follows}: 

\textit{$\bullet$ Notation}: 
We give a quick introduction to differential geometric quantities used in this paper, \eg the induced metric, 
the second fundamental form, and the principal curvatures. 

\textit{$\bullet$ Linear operator $L$}: 
We calculate the linear operator $Lw:=\frac{d}{dt}w-F^{ij}w_{;ij}$ for a function $w$ of the principal curvatures $\lambda_i$, $i=1,2,3$,
at a critical point of $w$.
To improve readability, we also choose normal coordinates at that critical point,
\ie $g_{ij}=\delta_{ij}$, and $\left(h_{ij}\right)=\diag\left(\lambda_1,\lambda_2,\lambda_3\right)$.
This lays the groundwork for subsequent calculations.

\textit{$\bullet$ Vanishing functions}:
In $\R^{n+1}$, $n=2$, for many normal velocities $F$ the quantity
\begin{align*}
  \frac{\left(\lambda_1-\lambda_2\right)^2}{\left(\lambda_1\,\lambda_2\right)^2} F^2
\end{align*}
seems to be the natural choice when showing convergence to a round point.
As in \cite{mf:when}, we call this quantity a \textit{vanishing function} for a normal velocity $F$.
It is used by B. Andrews for the Gauss curvature flow \cite{ba:gauss}, 
by F. Schulze and O. Schn\"urer for the $H^\sigma$-flow \cite{fs:convexity}, 
by B. Andrews and X. Chen for the $|A|^\sigma$ and the $\tr A^\sigma$-flow \cite{ac:surfaces}. 

The quantity 
\begin{align*}
  \sum_{i<j} \frac{\left(\lambda_i-\lambda_j\right)^2}{\left(\lambda_i\,\lambda_j\right)^2} F^2
\end{align*}
is the counterpart of a vanishing function for arbitrary dimensions $n\in\N$.
In particular, we work with this quantity in $\R^{n+1}$, $n=3$.

\textit{$\bullet$ $H^3$-flow}:
The proof of our main Theorem \ref{thm3:H3} is based on investigating the quantities
\begin{align*}
  \phi_{H^3} =&\, \frac{(a-b)^2+(a-c)^2+(b-c)^2}{(a+b+c)^2}, \\
  \text{and }\quad \psi_{H^3} =&\,  \left( \frac{(a-b)^2}{(a\,b)^2} + \frac{(a-c)^2}{(a\,c)^2} + \frac{(b-c)^2}{(b\,c)^2} \right) \left(H^3\right)^2,
\end{align*}
which are homogeneous functions of the principal curvatures $a\equiv \lambda_1$, $b\equiv\lambda_2$, and $c\equiv\lambda_3$.
First we show that the estimate $\phi_{H^3}\leq h := 1/8$ is preserved during the $H^3$-flow  
if the initial hypersurface $M_0$ is $2$-pinched. 
Next we prove that $\psi_{H^3}$ is bounded in time on the set where $\phi_{H^3}\leq h$.
This involves the maximum-principle, the linear operator $L$ and our computer program $[\textsc{cp}]$.
Finally, we show convergence to a round point combining the boundedness of $\psi_{H^3}$ and the proof of \cite[Theorem A.1.]{fs:convexity} 
by F. Schulze and O. Schn\"urer. 

\textit{$\bullet$ $|A|^2$-flow and Gauss curvature flow}:
We sketch the proof of results similar to our main Theorem \ref{thm3:H3} for the $|A|^2$-flow, and for the Gauss curvature flow.

\textit{$\bullet$ Appendix}:
Some of the Lemmas leading up to the proof our main Theorem \ref{thm3:H3} rely
on the computer program $[\textsc{cp}]$, where we use a Monte-Carlo method.
For the convenience of the reader, we include the source code of $[\textsc{cp}]$ 
in three different programming languages, namely the computer algebra systems Mathematica, Sage, and Maple.

\section{Acknowledgments}

We would like to thank O. Schn\"urer for suggesting 
the use of two monotone quantities instead of one.
In particular, we thank O. Schn\"urer for proposing the quantity $\psi_{|A|^2}$, 
and M. Makowski for proposing the quantity $\phi_{|A|^2}$.
We are also indebted to M. Westerholt-Raum and F. Kuhl for their help in translating  
the computer program $[\textsc{cp}]$ from Mathematica to Sage and to Maple.

\section{Notation}

For a quick introduction of the standard notation we adopt the 
corresponding chapter from \cite{os:surfacesA2}. 

We use $X=X(x,\,t)$ to denote the embedding vector of an $n$-manifold $M_t$ into $\R^{n+1}$ and 
$\frac{d}{dt} X=\dot{X}$ for its total time derivative. 
It is convenient to identify $M_t$ and its embedding in $\R^{n+1}$.
The normal velocity $F$ is a homogeneous symmetric function of the principal curvatures.
We choose $\nu$ to be the outer unit normal vector to $M_t$. 
The embedding induces a metric $g_{ij} := \langle X_{,i},\, X_{,j} \rangle$ and
the second fundamental form $h_{ij} := -\langle X_{,ij},\,\nu \rangle$ for all $i,\,j = 1,\ldots,n$. 
We write indices preceded by commas to indicate differentiation with respect to space components, 
\eg $X_{,k} = \frac{\partial X}{\partial x_k}$ for all $k=1,\ldots,n$.

We use the Einstein summation notation. 
When an index variable appears twice in a single term it implies summation of that term over all the values of the index.

Indices are raised and lowered with respect to the metric
or its inverse $\left(g^{ij}\right)$, 
\eg $h_{ij} h^{ij} = h_{ij} g^{ik} h_{kl} g^{lj} = h^k_j h^j_k$.

The principal curvatures $\lambda_i$, $i=1,\ldots,n$, are the eigenvalues of the second fundamental
form $\left(h_{ij}\right)$ with respect to the induced metric $\left(g_{ij}\right)$. 
For $n=3$, we name the principle curvatures also $a\equiv \lambda_1$, $b\equiv \lambda_2$, and $c\equiv \lambda_3$. 
A surface is called strictly convex
if all principal curvatures are strictly positive.
We will assume this throughout the paper.
Therefore, we may define the inverse of the second fundamental
form denoted by $(\tilde h^{ij})$.

Symmetric functions of the principal
curvatures are well-defined, we will use the Gauss curvature
$K=\frac{\det h_{ij}}{\det g_{ij}} = \prod^n_{i=1} \lambda_i$, the mean curvature
$H=g^{ij} h_{ij} = \sum^n_{i=1} \lambda_i$, the square of the norm of the second fundamental form
$|A|^2= h^{ij} h_{ij} = \sum^n_{i=1} \lambda_i^2$, and the trace of powers of the second fundamental form 
$\tr A^{\sigma} = \tr \left(h^i_j\right)^{\sigma} = \sum^n_{i=1} \lambda_i^\sigma$. We write indices preceded by semi-colons 
to indicate covariant differentiation with respect to the induced metric,
e.\,g.\ $h_{ij;\,k} = h_{ij,k} - \Gamma^l_{ik} h_{lj} - \Gamma^l_{jk} h_{il}$, 
where $\Gamma^k_{ij} = \frac{1}{2} g^{kl} \left(g_{il,j} + g_{jl,i} - g_{ij,l}\right)$.
It is often convenient to choose normal coordinates, \ie coordinate systems such
that at a point the metric tensor equals the Kronecker delta, $g_{ij}=\delta_{ij}$,
in which $\left(h_{ij}\right)$ is diagonal, $(h_{ij})=\diag(\lambda_1,\ldots,\lambda_n)$. 
Whenever we use this notation, we will also assume that we have 
fixed such a coordinate system. We will only use a Euclidean metric
for $\R^{n+1}$ so that the indices of $h_{ij;\,k}$ commute according to
the Codazzi-Mainardi equations.

A normal velocity $F$ can be considered as a function of principal curvatures $\lambda_i$, $i=1,\ldots,n$,
or $(h_{ij},\,g_{ij})$. We set $F^{ij}=\frac{\partial F}{\partial h_{ij}}$,
$F^{ij,\,kl}=\frac{\partial^2 F}{\partial h_{ij}\partial h_{kl}}$. 
Note that in coordinate
systems with diagonal $h_{ij}$ and $g_{ij}=\delta_{ij}$ as mentioned
above, $F^{ij}$ is diagonal.

\section{Linear operator $L$}

We begin this chapter with Definition \ref{def3:linear operator} of the linear operator $Lw$
for a function $w$ of the principal curvatures $\lambda_i$, $i=1,\ldots,n$.
Then we calculate the linear operator $Lw$ at a critical point of $w$ in $\R^{n+1}$, $n=3$.
To improve readability, we also choose normal coordinates at that critical point,
\ie $g_{ij}=\delta_{ij}$, and $\left(h_{ij}\right)=\diag\left(\lambda_1,\lambda_2,\lambda_3\right)\equiv\diag\left(a,b,c\right)$.
This is Lemma \ref{lem3:critical point}. 
In Corollary \ref{cor3:critical point} we will see that the linear operator $Lw$ has the form
\begin{align*}
  Lw = \vec{C_w} 
     + \vec{E_w}\,\vec{x_0}^2
     + \vec{x_1}^\top\, M^\vec{R_w}\, \vec{x_1} 
     + \vec{x_2}^\top\, M^\vec{S_w}\, \vec{x_2} 
     + \vec{x_3}^\top\, M^\vec{T_w}\, \vec{x_3},
\end{align*}
where $\vec{C_w}(a,b,c), \vec{E_w}(a,b,c)$ are functions in $\R$, and
$M^\vec{R_w}(a,b,c)$, $M^\vec{S_w}(a,b,c)$, $M^\vec{T_w}(a,b,c)$ are functions in $\R^{2\times 2}$ with 
\begin{align*}
  \vec{x_0} = h_{12;3},\;
  \vec{x_1} = \binom{h_{22;1}}{h_{33;1}},\;
  \vec{x_2} = \binom{h_{11;2}}{h_{33;2}},\;
  \vec{x_3} = \binom{h_{11;3}}{h_{22;3}}.
\end{align*}
In subsequent calculations we need the linear operator $Lw$ to be non-positive
for some set $\Sc\subset\R^3_{+}$.
We achieve this by checking the non-positivity 
of each of the functions $\vec{C_w}, \vec{E_w}$, $M^\vec{R_w}$, $M^\vec{S_w}$, 
and $M^\vec{T_w}$ on $\Sc\subset\R^3_{+}$.
In Remark \ref{rem3:sufficient conditions} we state
the criterion we use in our computer program $[\textsc{cp}]$
to determine the negative semi-definiteness of $M^\vec{R_w}$, $M^\vec{S_w}$, 
and $M^\vec{T_w}$.

\begin{definition}[Linear operator]\label{def3:linear operator}
  Let $w$ be a function of the principal curvatures.
  Then we define the linear operator $L$ by
  \begin{align}
    Lw=\frac{d}{dt}w-F^{ij}w_{;ij},
  \end{align}
  which is corresponding to the geometric flow equation \eqref{eq3:evol}.
\end{definition}
 
\begin{lemma}[Linear operator]\label{lem3:linear operator}
  Let $w=w\big(h^j_i\big)$ be a function of the principal curvatures.
  Let $L$ be defined as in \eqref{def3:linear operator}.
  Then we have
  \begin{align}
    \begin{split}
      Lw =&\, w^{ij}\left(h_{ij}F^{kl}h^m_kh_{lm}+h^m_ih_{jm}\left(F-F^{kl}h_{kl}\right)\right) \\
          &\quad+\left(w^{ij}F^{kl,rs}-F^{ij}w^{kl,rs}\right)h_{kl;i}h_{rs;j}.
    \end{split}
  \end{align}
\end{lemma}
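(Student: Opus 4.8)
The plan is to compute $Lw = \frac{d}{dt}w - F^{ij}w_{;ij}$ directly from the evolution equations of the relevant geometric quantities. First I would recall the standard evolution equation for the second fundamental form under the flow \eqref{eq3:evol}, namely $\frac{d}{dt}h_{ij} = F^{kl}h_{ij;kl} + (\text{terms involving } F^{kl,rs}, h_{kl;i}, h_{rs;j}) + F^{kl}h^m_kh_{lm}\,h_{ij} - (F^{kl}h_{kl} - F)h^m_ih_{jm}$, together with $\frac{d}{dt}g_{ij} = -2Fh_{ij}$, which together give the evolution of the mixed tensor $h^j_i = g^{jk}h_{ki}$. Since $w$ is a symmetric function of the eigenvalues of $h^j_i$, we may regard $w = w(h^j_i)$ and differentiate via the chain rule, writing $w^{ij} := \frac{\partial w}{\partial h_{ij}}$ and $w^{ij,rs} := \frac{\partial^2 w}{\partial h_{ij}\partial h_{rs}}$.

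The key steps, in order, are: (1) write $\frac{d}{dt}w = w^{ij}\frac{d}{dt}h^i_j$ and substitute the evolution equation for $h^i_j$; (2) write $w_{;k} = w^{ij}h_{ij;k}$ and differentiate once more covariantly to get $w_{;kl} = w^{ij}h_{ij;kl} + w^{ij,rs}h_{ij;k}h_{rs;l}$, then contract with $F^{kl}$; (3) subtract. The second-order spatial derivatives $w^{ij}h_{ij;kl}$ appearing in both $\frac{d}{dt}w$ (through the $F^{kl}h_{ij;kl}$ term in the evolution of $h_{ij}$) and in $F^{kl}w_{;kl}$ cancel exactly — this is the whole point of the operator $L$ being built from $F^{ij}(\cdot)_{;ij}$. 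What survives from the reaction terms is $w^{ij}\big(h_{ij}F^{kl}h^m_kh_{lm} + h^m_ih_{jm}(F - F^{kl}h_{kl})\big)$, which is precisely the first line of the claimed formula; one must be slightly careful tracking the contributions of $\frac{d}{dt}g_{ij} = -2Fh_{ij}$ when passing between $h_{ij}$ and $h^i_j$, but these combine into the stated form. From the gradient terms, the evolution of $h_{ij}$ contributes $w^{ij}F^{kl,rs}h_{kl;i}h_{rs;j}$ and step (2) contributes $-F^{ij}w^{kl,rs}h_{kl;i}h_{rs;j}$, giving the second line.

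The main obstacle I anticipate is bookkeeping rather than conceptual: correctly assembling Simons-type identity for $\frac{d}{dt}h_{ij}$ and its version for $h^i_j$, keeping the Codazzi symmetry of $h_{kl;i}$ in mind so that the gradient terms may be freely symmetrized, and making sure that the contractions $F^{kl}h_{ij;kl}$ cancel with full index structure (not just up to lower-order terms). One should also verify that there are no leftover curvature (Riemann) terms: on a hypersurface in Euclidean space the Gauss equation expresses $R_{ijkl}$ in terms of $h$, and the relevant such terms are already absorbed into the reaction part quoted above. Once the cancellation is confirmed, the identity follows by collecting terms, and I would present it as a direct computation, citing the standard evolution equations (\eg from \cite{os:surfacesA2}) for $g_{ij}$, $h_{ij}$, and $F$ rather than rederiving them.
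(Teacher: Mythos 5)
The paper does not actually prove this lemma: the given ``proof'' is a one-line citation to \cite[Lemma 4.5]{mf:when}. Your proposal supplies the direct computation that such a reference would contain, and the outline is sound: substitute the evolution of $h^j_i$ (reaction terms plus the parabolic $F^{kl}h^j_{i;kl}$ and the $F^{kl,rs}h_{kl;}{}^j h_{rs;i}$ gradient term) into $\frac{d}{dt}w$, expand $F^{kl}w_{;kl}$ by the chain rule to get $F^{kl}w^{ij}h_{ij;kl}+F^{kl}w^{ij,rs}h_{ij;k}h_{rs;l}$, observe that the $w^{ij}F^{kl}h_{ij;kl}$ pieces cancel, and collect the surviving reaction and gradient terms into the stated form.

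Two small points are worth tightening before calling it a proof rather than a plan. First, you interchangeably write $\frac{d}{dt}w = w^{ij}\frac{d}{dt}h^i_j$ with $w^{ij}:=\partial w/\partial h_{ij}$; since $w$ depends on the eigenvalues, i.e.\ on the mixed tensor, the clean versions are either $\frac{d}{dt}w = \frac{\partial w}{\partial h^j_i}\frac{d}{dt}h^j_i$, or else $\frac{d}{dt}w = w^{ij}\dot h_{ij} + \frac{\partial w}{\partial g_{ij}}\dot g_{ij}$ with the identity $\frac{\partial w}{\partial g_{ij}}=-w^{ik}h_k{}^j$ and $\dot g_{ij}=-2Fh_{ij}$; you flag this but it is exactly where the term $h_{ij}F^{kl}h^m_k h_{lm}$ versus $h^m_i h_{jm}F^{kl}h_{kl}$ gets its relative sign, so it should be carried out explicitly. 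Second, the form of $\frac{d}{dt}h_{ij}$ you quote already has Simons' identity built in (to turn $F^{kl}h_{kl;ij}$ into $F^{kl}h_{ij;kl}$ plus curvature terms); on a Euclidean background those curvature terms are quadratic in $h$ via the Gauss equation and are precisely what produce the reaction part of the claimed formula, so they must be written out rather than waved at. With those two steps made explicit, the argument closes.
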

\begin{proof}
  We refer to \cite[Lemma 4.5]{mf:when}.
\end{proof}

\begin{lemma}[Second derivatives]\label{lem3:second derivatives}
  Let $f$ be a normal velocity $F$ or a function $w$ of the principal curvatures.
  Then we have
  \begin{align}
    f^{ij,kl}\eta_{ij}\eta_{kl} 
    = \sum_{i,j} \frac{\partial^2 f}{\partial \lambda_i\partial \lambda_j}\eta_{ii}\eta_{jj}
     +\sum_{i\neq j}\frac{\frac{\partial f}{\partial \lambda_i}-\frac{\partial f}{\partial \lambda_j}}{\lambda_i-\lambda_j}\eta_{ij}^2
  \end{align}
  for any symmetric matrix $(\eta_{ij})$ and $\lambda_i\neq\lambda_j$, 
  or $\lambda_i=\lambda_j$ and the last term is interpreted as a limit.
\end{lemma}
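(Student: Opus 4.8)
The plan is to evaluate both sides at a point where $(h_{ij})$ is diagonal and to obtain the identity by computing a single second directional derivative of $f$ in two ways: as a function of the matrix entries $h_{ij}$, and as a symmetric function $\tilde f$ of the eigenvalues. Concretely, I would fix normal coordinates with $g_{ij}=\delta_{ij}$ and $(h_{ij})=D:=\diag(\lambda_1,\dots,\lambda_n)$, abbreviate $f_i:=\partial\tilde f/\partial\lambda_i$ and $f_{ij}:=\partial^2\tilde f/\partial\lambda_i\partial\lambda_j$, fix an arbitrary symmetric matrix $(\eta_{ij})$, and consider the affine path $h(s)=D+s\eta$ of symmetric matrices. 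Since $h(s)$ is linear in $s$, the chain rule gives
\begin{align*}
  \left.\tfrac{d^2}{ds^2}\right|_{s=0} f\big(h(s)\big) = f^{ij,kl}\,\eta_{ij}\eta_{kl},
\end{align*}
so it suffices to compute this left-hand side through the eigenvalues.

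Assume first that the $\lambda_i$ are pairwise distinct. Then for small $|s|$ the eigenvalues $\mu_1(s),\dots,\mu_n(s)$ of $h(s)$ are smooth in $s$, and first- and second-order eigenvalue perturbation theory yields $\mu_i(0)=\lambda_i$, $\mu_i'(0)=\eta_{ii}$, and $\mu_i''(0)=2\sum_{k\neq i}\eta_{ik}^2/(\lambda_i-\lambda_k)$. Using $f\big(h(s)\big)=\tilde f\big(\mu_1(s),\dots,\mu_n(s)\big)$ and differentiating twice at $s=0$,
\begin{align*}
  f^{ij,kl}\eta_{ij}\eta_{kl}
   = \sum_{i,j} f_{ij}\,\eta_{ii}\eta_{jj} + \sum_i f_i\,\mu_i''(0)
   = \sum_{i,j} f_{ij}\,\eta_{ii}\eta_{jj} + 2\sum_{i}\sum_{k\neq i}\frac{f_i}{\lambda_i-\lambda_k}\,\eta_{ik}^2 .
\end{align*}
Splitting the last double sum into two equal halves and relabelling $i\leftrightarrow k$ in one of them — using $\eta_{ik}=\eta_{ki}$ — replaces the coefficient $2f_i/(\lambda_i-\lambda_k)$ by $(f_i-f_k)/(\lambda_i-\lambda_k)$, which is exactly the asserted formula.

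It then remains to drop the hypothesis of distinct eigenvalues. Here I would argue by continuity: by the symmetry and smoothness of $\tilde f$ the difference quotient $(f_i-f_j)/(\lambda_i-\lambda_j)$ extends smoothly across the diagonal $\lambda_i=\lambda_j$ (with limit $f_{ii}-f_{ij}$ as $\lambda_j\to\lambda_i$), so the right-hand side is a continuous function of $(\lambda_1,\dots,\lambda_n)$; the left-hand side $f^{ij,kl}\eta_{ij}\eta_{kl}$ is manifestly continuous as well, and matrices with simple spectrum are dense, so the identity already proved persists in the limit. I expect the only genuinely delicate ingredient to be the second-order eigenvalue expansion $\mu_i''(0)=2\sum_{k\neq i}\eta_{ik}^2/(\lambda_i-\lambda_k)$ — the classical Rayleigh--Schr\"odinger computation — together with the care needed to pass to coincident eigenvalues; the rest is bookkeeping with the chain rule. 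An alternative route that avoids eigenvalue perturbation is to differentiate the invariance $f(\mathcal{O}h\mathcal{O}^\top)=f(h)$ along $\mathcal{O}(s)=\exp(sA)$ with $A$ antisymmetric: for $A=E_{ij}-E_{ji}$ this simultaneously produces $f^{ij}=f_i\,\delta^{ij}$ and the off-diagonal coefficient $(f_i-f_j)/(\lambda_i-\lambda_j)$, which combined with the diagonal chain rule recovers the stated identity.
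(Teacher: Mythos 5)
Your argument is correct and complete. The paper itself gives no proof here: it simply cites Gerhardt \cite[Lemma~2.1.14]{cg:curvature}, so strictly speaking you have supplied an argument where the paper defers to a reference. Your route — writing $h(s)=D+s\eta$, computing $\tfrac{d^2}{ds^2}\big|_{s=0}f(h(s))$ once via the chain rule in the matrix entries and once via $\tilde f(\mu_1(s),\dots,\mu_n(s))$ with the first-- and second--order eigenvalue expansions $\mu_i'(0)=\eta_{ii}$, $\mu_i''(0)=2\sum_{k\neq i}\eta_{ik}^2/(\lambda_i-\lambda_k)$ — is the standard perturbation-theoretic derivation and matches the spirit of Gerhardt's treatment. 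The symmetrization step that turns $2f_i/(\lambda_i-\lambda_k)$ into $(f_i-f_k)/(\lambda_i-\lambda_k)$ is right, and so is the removal of the distinct-eigenvalue hypothesis by density and continuity; your observation that $(f_i-f_j)/(\lambda_i-\lambda_j)\to f_{ii}-f_{ij}$ uses the symmetry of $\tilde f$ (via $f_i(\dots,\lambda_i,\dots,\lambda_j,\dots)=f_j(\dots,\lambda_j,\dots,\lambda_i,\dots)$), which should be stated explicitly since the lemma applies to symmetric $f$. Two small points worth making precise: (i) the identity $\tfrac{d^2}{ds^2}\big|_{s=0}f(h(s))=f^{ij,kl}\eta_{ij}\eta_{kl}$ uses the convention that $f^{ij}$, $f^{ij,kl}$ are the (symmetrized) derivatives on the space of symmetric matrices, which is indeed what the paper intends; (ii) the identification $f(h(s))=\tilde f(\mu(s))$ presumes $\tilde f$ symmetric, which holds for normal velocities and for the $w$'s considered. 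Your alternative via differentiating the $\mathcal{O}(n)$-invariance along $\exp(sA)$ is also a valid route and would avoid the eigenvalue perturbation formulas entirely.
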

\begin{proof}
  We refer to C. Gerhardt \cite[Lemma~2.1.14]{cg:curvature}.
\end{proof}

\begin{lemma}[Linear operator at a critical point]\label{lem3:critical point}
  Let $w=w\big(h^j_i\big)$ be a symmetric function of the principal curvatures $a$, $b$, and $c$.
  At a critical point of $w$, \ie $w_{;i}=0$ for all $i=1,2,3$, we choose normal coordinates, \ie
  $g_{ij}=\delta_{ij}$ and $\big(h_{ij}\big)=\diag(a,b,c)$.
  Then we have
  \begin{align}
    \begin{split} 
      Lw =&\, \vec{C_w}(a,b,c) \\
          & + \vec{E_w}(a,b,c)\,h_{12;3}^2 \\
          & + \vec{R_w}(a,b,c,h_{11;1},h_{22;1},h_{33;1}) \\
          & + \vec{S_w}(a,b,c,h_{11;2},h_{22;2},h_{33;2}) \\
          & + \vec{T_w}(a,b,c,h_{11;3},h_{22;3},h_{33;3})
    \end{split} 
  \end{align}
  The constant terms $\vec{C_w}$ are
  \begin{align*}
    \vec{C_w}(a,b,c) =&\, a\,w_a\left(a^2\,F_a+b^2\,F_b+c^2\,F_c+a\left(F-a\,F_a-b\,F_b-c\,F_c\right)\right) \umbruch \\
                & +b\,w_a\left(a^2\,F_a+b^2\,F_b+c^2\,F_c+b\left(F-a\,F_a-b\,F_b-c\,F_c\right)\right) \umbruch \\
                & +c\,w_c\left(a^2\,F_a+b^2\,F_b+c^2\,F_c+c\left(F-a\,F_a-b\,F_b-c\,F_c\right)\right).
  \end{align*}
  The gradient terms $\vec{E_w}$ are
  \begin{align*}
    \vec{E_w}(a,b,c)/2=&\, \Big( \left(w_c\left(F_a-F_b\right)-F_c\left(w_a-w_b\right)\right)/(a-b) \Big. \umbruch \\
                  & \Big. + \left(w_b\left(F_a-F_c\right)-F_b\left(w_a-w_c\right)\right)/(a-c) \Big. \umbruch \\
                  & \Big. + \left(w_a\left(F_b-F_c\right)-F_a\left(w_b-w_c\right)\right)/(b-c) \Big).
  \end{align*}
  The gradient terms $\vec{R_w}$ are
  \begin{align*}
      \vec{R_w}(a,b,c,&\, h_{11;1},h_{22;1},h_{33;1}) \umbruch \\
     =&\, w_a\left(\left(F_{aa}\,h_{11;1}^2+F_{bb}\,h_{22;1}^2+F_{cc}\,h_{33;1}^2\right) \right. \\
      &\qquad\quad \left. +2\left(F_{ab}\,h_{11;1}\,h_{22;1} + F_{ac}\,h_{11;1}\,h_{33;1} + F_{bc}\,h_{22;1}\,h_{33;1} \right)\right) \umbruch \\
      &\,+w_b\left(2\frac{F_a-F_b}{a-b} h_{22;1}^2 \right) \umbruch \\
      &\,+w_c\left(2\frac{F_a-F_c}{a-c} h_{33;1}^2 \right) \umbruch \\
      &\,-F_a\left(w_{aa}\,h_{11;1}^2+w_{bb}\,h_{22;1}^2 + w_{cc}\,h_{33;1}^2 \right. \\
      &\qquad\quad \left.+2\left(w_{ab}\,h_{11;1}\,h_{22;1} + w_{ac}\,h_{11;1}\,h_{33;1} + w_{bc}\,h_{22;1}\,h_{33;1} \right)\right) \umbruch \\
      &\,-F_b\left(2\frac{w_a-w_b}{a-b} h_{22;1}^2\right) \\
      &\,-F_c\left(2\frac{w_a-w_c}{a-c} h_{33;1}^2\right).                      
  \end{align*}
  The gradient terms $\vec{S_w}$ are
  \begin{align*}
      \vec{S_w}(a,b,c,&\,h_{11;2},h_{22;2},h_{33;2}) \\
     =&\,w_a\left(2\frac{F_a-F_b}{a-b} h_{11;2}^2 \right) \\
      &\,+w_b\left(F_{aa}\,h_{11;2}^2 + F_{bb}\,h_{22;2}^2 + F_{cc}\,h_{33;2}^2 \right. \\
      &\qquad\quad \left.+2\left(F_{ab}\,h_{11;2}\,h_{22;2} + F_{ac}\,h_{11;2}\,h_{33;2} + F_{bc}\,h_{22;2}\,h_{33;2} \right)\right) \\
      &\,+w_c\left(2\frac{F_b-F_c}{b-c} h_{33;2}^2 \right) \\
      &\,-F_a\left(2\frac{w_a-w_b}{a-b} h_{11;2}^2 \right) \\
      &\,-F_b\left(w_{aa}\,h_{11;2}^2+w_{bb}\,h_{22;2}^2 + w_{cc}\,h_{33;2}^2 \right. \\
      &\qquad\quad \left. +2\left(w_{ab}\,h_{11;2}\,h_{22;2}+w_{ac}\,h_{11;2}\,h_{33;2}+w_{bc}\,h_{22;2}\,h_{33;2}\right)\right) \\
      &\,-F_c\left(2\frac{w_b-w_c}{b-c} h_{33;2}^2\right).
  \end{align*}
  The gradient terms $\vec{T_w}$ are
  \begin{align*}
       \vec{T_w}(a,b,c,&\,h_{11;3},h_{22;3},h_{33;3}) \umbruch \\
     =&\,w_a\left(2\frac{F_a-F_c}{a-c} h_{11;3}^2 \right) \umbruch \\
      &\,+w_b\left(2\frac{F_b-F_c}{b-c} h_{22;3}^2 \right) \umbruch \\
      &\,+w_c\left(F_{aa}\,h_{11;3}^2+F_{bb}\,h_{22;3}^2+F_{cc}\,h_{33;3}^2 \right. \\
      &\qquad\quad \left.+2\left(F_{ab}\,h_{11;3}\,h_{22;3} + F_{ac}\,h_{11;3}\,h_{33;3}+F_{bc}\,h_{22;3}\,h_{33;3}\right)\right) \umbruch \\
      &\,-F_a\left(2\frac{w_a-w_c}{a-c} h_{11;3}^2 \right) \umbruch \\
      &\,-F_b\left(2\frac{w_b-w_c}{b-c} h_{22;3}^2 \right) \umbruch \\
      &\,-F_c\left(w_{aa}\,h_{11;3}^2 + w_{bb}\,h_{22;3}^2 + w_{cc}\,h_{33;3}^2 \right. \\
      &\qquad\quad \left.+2\left(w_{ab}\,h_{11;3}\,h_{22;3} + w_{ac}\,h_{11;3}\,h_{33;3} + w_{bc}\,h_{22;3}\,h_{33;3} \right)\right).
  \end{align*}  
  Furthermore, we have at a critical point of $w$
  \begin{align}\label{id:critical point}
    \begin{split}
      h_{11;1} =&\, -\frac{1}{w_a}\left(w_b\,h_{22;1} + w_c\,h_{33;1} \right), \\
      h_{22;2} =&\, -\frac{1}{w_b}\left(w_a\,h_{11;2} + w_c\,h_{33;2} \right), \\
      h_{33;3} =&\, -\frac{1}{w_c}\left(w_a\,h_{11;3} + w_b\,h_{22;3} \right).
    \end{split}
  \end{align}
\end{lemma}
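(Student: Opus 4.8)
The statement is a pure computation: we are given the flow \eqref{eq3:evol}, the general formula for $Lw$ from Lemma \ref{lem3:linear operator}, and Lemma \ref{lem3:second derivatives} expressing the second-derivative contractions in terms of eigenvalue derivatives; the claim merely reorganizes $Lw$ at a critical point in normal coordinates into a constant part $\vec{C_w}$, a pure $h_{12;3}^2$ part $\vec{E_w}$, and three diagonal-gradient blocks $\vec{R_w}, \vec{S_w}, \vec{T_w}$. So the plan is to substitute and sort terms by which components of $h_{ij;k}$ they involve, using the Codazzi symmetry $h_{ij;k}=h_{ik;j}$ throughout so that the only independent components are $h_{11;1},h_{22;1},h_{33;1}$, $h_{11;2},h_{22;2},h_{33;2}$, $h_{11;3},h_{22;3},h_{33;3}$ and the single off-diagonal $h_{12;3}=h_{13;2}=h_{23;1}$.

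First I would handle the zeroth-order piece: in Lemma \ref{lem3:linear operator} the term $w^{ij}\left(h_{ij}F^{kl}h^m_kh_{lm}+h^m_ih_{jm}(F-F^{kl}h_{kl})\right)$ contains no derivatives of $h$, so in normal coordinates with $(h_{ij})=\diag(a,b,c)$ it collapses to a sum over $i=1,2,3$ of $w_{\lambda_i}\lambda_i\left(\sum_j \lambda_j^2 F_{\lambda_j}+\lambda_i(F-\sum_j\lambda_j F_{\lambda_j})\right)$, which is exactly $\vec{C_w}$ (modulo the evident typos $w_a,w_a,w_c$ that should read $w_a,w_b,w_c$). Then I would expand the derivative term $\left(w^{ij}F^{kl,rs}-F^{ij}w^{kl,rs}\right)h_{kl;i}h_{rs;j}$: since $F^{ij}$ and $w^{ij}$ are diagonal, the outer index pair forces $i=j$, so this is $\sum_{i}\left(w_{\lambda_i}F^{kl,rs}-F_{\lambda_i}w^{kl,rs}\right)h_{kl;i}h_{rs;i}$, and for each fixed $i$ I apply Lemma \ref{lem3:second derivatives} with $\eta_{kl}=h_{kl;i}$. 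The Hessian-in-$\lambda$ part produces the $F_{\lambda_k\lambda_l}h_{kk;i}h_{ll;i}$ and $w_{\lambda_k\lambda_l}h_{kk;i}h_{ll;i}$ terms (the pieces multiplied by $w_a$ and $-F_a$ etc.), while the $\sum_{k\neq l}\frac{f_{\lambda_k}-f_{\lambda_l}}{\lambda_k-\lambda_l}\eta_{kl}^2$ part, combined with Codazzi, routes each squared off-diagonal component to exactly one of the blocks: for $i=1$ the off-diagonals $h_{12;1}=h_{11;2}$ wait—more carefully, $h_{kl;1}$ with $k\neq l$ gives $h_{12;1}=h_{21;1}$, $h_{13;1}$, $h_{23;1}=h_{12;3}$; one checks that $h_{12;1}$ and $h_{13;1}$ actually equal diagonal components of other blocks by Codazzi ($h_{12;1}=h_{11;2}$, $h_{13;1}=h_{11;3}$) — no: $h_{12;1}$ is genuinely an index with a repeated slot only if two indices agree, which they do not, so instead the correct bookkeeping is $h_{12;1}=h_{11;2}$? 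That is false in general; rather $h_{12;1}=h_{21;1}$ and Codazzi gives $h_{12;1}=h_{11;2}$ only when — I would sort this by writing out all $\binom{4}{1}$... the honest statement is: the independent components are as listed, and each cross term $h_{kl;i}$ with all of $k,l,i$ distinct equals $h_{12;3}$ up to Codazzi, feeding $\vec{E_w}$, while $h_{kl;i}$ with exactly $k=i$ or $l=i$ but... equals a diagonal component $h_{mm;n}$, feeding $\vec{R_w},\vec{S_w},\vec{T_w}$; carrying out this index chase carefully is the bulk of the work.

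Having collected terms, the cross-block terms proportional to products like $h_{22;1}^2$ acquire a coefficient $2\frac{F_a-F_b}{a-b}$ from the $i=1$ application of Lemma \ref{lem3:second derivatives} (the factor $2$ from symmetry of $k\leftrightarrow l$), matching the $w_b\cdot 2\frac{F_a-F_b}{a-b}h_{22;1}^2$ and $-F_b\cdot 2\frac{w_a-w_b}{a-b}h_{22;1}^2$ lines of $\vec{R_w}$; similarly the gradient block $\vec{E_w}$ collects all three cross terms $h_{23;1}^2=h_{13;2}^2=h_{12;3}^2$ from the three values of $i$, giving the three summands of $\vec{E_w}/2$ with the cyclic $(a-b),(a-c),(b-c)$ denominators. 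Finally, the identity \eqref{id:critical point} is just the three equations $w_{;i}=0$: differentiating $w(h^j_i)$ covariantly and using that in normal coordinates $w_{;i}=\sum_j w_{\lambda_j}h_{jj;i}$ (the off-diagonal derivatives of $w$ drop because $(h_{ij})$ is diagonal), so $0=w_a h_{11;i}+w_b h_{22;i}+w_c h_{33;i}$, which for $i=1,2,3$ respectively solves for $h_{11;1},h_{22;2},h_{33;3}$. The main obstacle is purely organizational: keeping the Codazzi identifications straight so that every one of the (up to symmetry) eighteen independent derivative components lands in exactly the right block with the right coefficient and sign, and not losing or double-counting the factors of $2$; there is no conceptual difficulty, only the risk of transcription error, which is presumably why the authors also provide the computer program $[\textsc{cp}]$ to cross-check.
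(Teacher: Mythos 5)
Your proposal follows the same route as the paper: substitute Lemma \ref{lem3:second derivatives} into the general formula of Lemma \ref{lem3:linear operator}, work in normal coordinates where the diagonality of $F^{ij}$ and $w^{ij}$ collapses the outer summation to $i=j$, sort the resulting terms by which components of $\nabla h$ appear, and read off the identities \eqref{id:critical point} directly from $0=w_{;i}=w_a\,h_{11;i}+w_b\,h_{22;i}+w_c\,h_{33;i}$. The one place you should commit rather than hedge is the Codazzi bookkeeping: for a Euclidean ambient metric $h_{ij;k}$ is totally symmetric in all three indices, so $h_{12;1}=h_{11;2}$, $h_{13;1}=h_{11;3}$, and $h_{23;1}=h_{13;2}=h_{12;3}$ hold identically --- your first instinct was right, and the subsequent self-doubt claiming this fails in general is itself the error. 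With full symmetry in hand, every off-diagonal cross term $h_{kl;i}^2$, $k\neq l$, produced by the second sum in Lemma \ref{lem3:second derivatives} lands in a unique block: $\vec{E_w}$ when $k,l,i$ are pairwise distinct, and one of $\vec{R_w},\vec{S_w},\vec{T_w}$ otherwise, chosen according to the index among $k,l$ that differs from $i$ (since $h_{il;i}=h_{ii;l}$). The factor $2$ comes cleanly from the symmetric sum over $k\neq l$, so there is no double-counting to worry about. Your observation of the typo in $\vec{C_w}$ (the middle line's $b\,w_a$ should read $b\,w_b$) is also correct.
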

\begin{proof}
  We use Lemma \ref{lem3:linear operator}, and Lemma \ref{lem3:second derivatives} at a point,
  where we choose normal coordinates.
  This way we obtain the constant terms $\vec{C_w}$, and the four gradient terms $\vec{E_w}$, $\vec{R_w}$, $\vec{S_w}$, and $\vec{T_w}$. \\
  
  At a critical point of $w$, we have $w_i(a,b,c) = 0$ for $i=1,2,3$.
  This implies
  \begin{align*}
    w_a\,h_{1l;i}\,g^{l1} + w_b\,h_{2l;i}\,g^{l2} + w_c\,h_{3l;i}\,g^{l3} = 0.
  \end{align*}
  Using normal coordinates we obtain
  \begin{align*}
    w_a\,h_{11;i} + w_b\,h_{22;i} + w_c\,h_{33;i} = 0.
  \end{align*}
  Now we obtain for $i=1,2,3$ the identities
  \begin{align*}
    h_{11;1} =&\, -\frac{1}{w_a}\left(w_b\,h_{22;1} + w_c\,h_{33;1} \right), \\
    h_{22;2} =&\, -\frac{1}{w_b}\left(w_a\,h_{11;2} + w_c\,h_{33;2} \right), \\
    h_{33;3} =&\, -\frac{1}{w_c}\left(w_a\,h_{11;3} + w_b\,h_{22;3} \right).
  \end{align*}
  This concludes the proof.
\end{proof}

\begin{corollary}[Linear operator at a critical point]\label{cor3:critical point}
  Let the gradient terms $\vec{R_w}$, $\vec{S_w}$, and $\vec{T_w}$
  be defined as in Lemma \ref{lem3:critical point}. \\
  Then we have
  \begin{align*}
    \vec{R_w}(a,b,c,h_{22;1},h_{33;1}) =&\, \binom{h_{22;1}}{h_{33;1}}^\top\, M^\vec{R_w}(a,b,c)\, \binom{h_{22;1}}{h_{33;1}}, \\
    \vec{S_w}(a,b,c,h_{11;2},h_{33;2}) =&\, \binom{h_{11;2}}{h_{33;2}}^\top\, M^\vec{S_w}(a,b,c)\, \binom{h_{11;2}}{h_{33;2}}, \\
    \vec{T_w}(a,b,c,h_{11;3},h_{22;3}) =&\, \binom{h_{11;3}}{h_{22;3}}^\top\, M^\vec{T_w}(a,b,c)\, \binom{h_{11;3}}{h_{22;3}}.
  \end{align*}
  The elements of the matrix $M^\vec{R_w}(a,b,c)$ are
  \begin{align*}
    m_{11}^\vec{R_w}(a,b,c) =&\, 2\frac{F_a\,w_b-F_b\,w_a}{a-b} \\
                             &\, + F_{aa} \frac{w_b^2}{w_a} - 2 F_{ab}\,w_b + F_{bb}\,w_a \\
                             &\, - F_a \left( w_{aa} \frac{w_b^2}{w_a^2} - 2 w_{ab} \frac{w_b}{w_a} + w_{bb} \right), \umbruch \\
    m_{12}^\vec{R_w}(a,b,c) =&\, F_{aa} \frac{w_b\,w_c}{w_a} - F_{ab}\,w_c - F_{ac}\,w_b + F_{bc}\,w_a \\
                             &\, -F_a \left( w_{aa} \frac{w_b\,w_c}{w_a^2} - w_{ab} \frac{w_c}{w_a} - w_{ac} \frac{w_b}{w_a} + w_{bc} \right), \umbruch \\
    m_{22}^\vec{R_w}(a,b,c) =&\, 2\frac{F_a\,w_c - F_c\,w_a}{a-c} \\
                             &\, + F_{aa}\,\frac{w_c^2}{w_a} -2\,F_{ac}\,w_c +F_{cc}\,w_a \\
                             &\, -F_a\left(w_{aa} \frac{w_c^2}{w_a^2} -2 w_{ac}\,\frac{w_c}{w_a} + w_{cc} \right).
  \end{align*}
  The elements of the matrix $M^\vec{S_w}(a,b,c)$ are
  \begin{align*}
    m_{11}^\vec{S_w}(a,b,c) =&\, 2\frac{F_a\,w_b-F_b\,w_a}{a-b} \\
                             &\, + F_{aa}\,w_b -2 F_{ab}\,w_a + F_{bb} \frac{w_a^2}{w_b} \\
                             &\, -F_b \left(w_{aa} - 2 w_{ab} \frac{w_a}{w_b} + w_{bb} \frac{w_a^2}{w_b^2} \right), \umbruch \\
    m_{12}^\vec{S_w}(a,b,c) =&\, -F_{ab}\,w_c + F_{ac}\,w_b + F_{bb} \frac{w_a\,w_c}{w_b} - F_{bc}\,w_a \\
                             &\, -F_b \left(-w_{ab} \frac{w_c}{w_b} + w_{ac} + w_{bb} \frac{w_a\,w_c}{w_b^2} - w_{bc} \frac{w_a}{w_b} \right), \umbruch \\
    m_{22}^\vec{S_w}(a,b,c) =&\, 2\frac{F_b\,w_c-F_c\,w_b}{b-c} \\
                             &\, +F_{bb} \frac{w_c^2}{w_b} - 2 F_{bc}\,w_c + F_{cc}\,w_b \\
                             &\, -F_b \left( w_{bb} \frac{w_c^2}{w_b^2} - 2 w_{bc} \frac{w_c}{w_b} + w_{cc} \right).                  
  \end{align*}
  The elements of the matrix $M^\vec{T_w}(a,b,c)$ are
  \begin{align*}
    m_{11}^\vec{T_w}(a,b,c) =&\, 2 \frac{F_a\,w_c-F_c\,w_a}{a-c} \\
                             &\, +F_{aa}\,w_c - 2 F_{ac}\,w_a + F_{cc} \frac{w_a^2}{w_c} \\
                             &\, -F_c \left( w_{aa} - 2 w_{ac} \frac{w_a}{w_c} + w_{cc} \frac{w_a^2}{w_c^2} \right), \umbruch \\
    m_{12}^\vec{T_w}(a,b,c) =&\, F_{ab}\,w_c - F_{ac}\,w_b - F_{bc}\,w_a + F_{cc} \frac{w_a\,w_b}{w_c} \\
                             &\, -F_c\left( w_{ab} - w_{ac} \frac{w_b}{w_c} - w_{bc} \frac{w_a}{w_c} + w_{cc} \frac{w_a\,w_b}{w_c^2} \right), \umbruch \\
    m_{22}^\vec{T_w}(a,b,c) =&\, 2 \frac{F_b\,w_c - F_c\,w_b}{b-c} \\
                             &\, +F_{bb}\,w_c - 2 F_{bc}\,w_b + F_{cc}\,\frac{w_b^2}{w_c} \\
                             &\, -F_c\left(w_{bb} - 2 w_{bc} \frac{w_b}{w_c} + w_{cc} \frac{w_b^2}{w_c^2} \right).
  \end{align*}
\end{corollary}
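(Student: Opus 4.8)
The plan is to take the three gradient terms $\vec{R_w}$, $\vec{S_w}$, $\vec{T_w}$ exactly as displayed in Lemma~\ref{lem3:critical point} and, in each of them, to eliminate one of the three directional derivatives using the critical-point identities~\eqref{id:critical point}. Concretely, in $\vec{R_w}$ we substitute $h_{11;1}=-\frac{1}{w_a}(w_b\,h_{22;1}+w_c\,h_{33;1})$, in $\vec{S_w}$ we substitute $h_{22;2}=-\frac{1}{w_b}(w_a\,h_{11;2}+w_c\,h_{33;2})$, and in $\vec{T_w}$ we substitute $h_{33;3}=-\frac{1}{w_c}(w_a\,h_{11;3}+w_b\,h_{22;3})$. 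After this substitution each of the three terms is a homogeneous quadratic polynomial in the two surviving directional derivatives, hence can be written uniquely as $\vec{x}^\top M\,\vec{x}$ with a symmetric $2\times 2$ matrix $M$ using the convention $\vec{x}^\top M\,\vec{x}=m_{11}x_1^2+2m_{12}x_1x_2+m_{22}x_2^2$; the asserted formulas then amount to reading off these coefficients.

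I would carry out the computation in full only for $\vec{R_w}$. Writing $p\equiv h_{22;1}$ and $q\equiv h_{33;1}$, the substitution gives $h_{11;1}^2=\tfrac{1}{w_a^2}\big(w_b^2 p^2+2w_b w_c\,pq+w_c^2 q^2\big)$, $h_{11;1}p=-\tfrac{1}{w_a}\big(w_b p^2+w_c\,pq\big)$ and $h_{11;1}q=-\tfrac{1}{w_a}\big(w_b\,pq+w_c q^2\big)$. Collecting the coefficient of $p^2$ produces the two fractions $2w_b\tfrac{F_a-F_b}{a-b}-2F_b\tfrac{w_a-w_b}{a-b}$, which recombine to $2\tfrac{F_a w_b-F_b w_a}{a-b}$, together with the second-derivative contribution $F_{aa}\tfrac{w_b^2}{w_a}-2F_{ab}w_b+F_{bb}w_a-F_a\big(w_{aa}\tfrac{w_b^2}{w_a^2}-2w_{ab}\tfrac{w_b}{w_a}+w_{bb}\big)$; this is precisely $m_{11}^{\vec{R_w}}$. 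The coefficient of $q^2$ is treated identically with $b$ replaced by $c$ throughout and gives $m_{22}^{\vec{R_w}}$, while the coefficient of $pq$, which equals $2m_{12}^{\vec{R_w}}$, collects only mixed second-derivative terms (there being no fraction term in $\vec{R_w}$ that contributes to $pq$) and yields $2\big(\tfrac{F_{aa}w_bw_c}{w_a}-F_{ab}w_c-F_{ac}w_b+F_{bc}w_a-F_a(w_{aa}\tfrac{w_bw_c}{w_a^2}-w_{ab}\tfrac{w_c}{w_a}-w_{ac}\tfrac{w_b}{w_a}+w_{bc})\big)$, in agreement with the stated $m_{12}^{\vec{R_w}}$.

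For $\vec{S_w}$ and $\vec{T_w}$ I would not repeat the algebra. The three terms $\vec{R_w}$, $\vec{S_w}$, $\vec{T_w}$ in Lemma~\ref{lem3:critical point} are obtained from one another by the cyclic relabelling $a\to b\to c\to a$ of the principal curvatures together with the matching cyclic permutation $1\to 2\to 3\to 1$ of the differentiation indices, and the identities~\eqref{id:critical point} respect this symmetry. Applying the relabelling to $M^{\vec{R_w}}$ therefore yields $M^{\vec{S_w}}$ and $M^{\vec{T_w}}$, and one checks that the resulting entries match the displayed formulas term by term. Degenerate cases $\lambda_i=\lambda_j$ are understood as the limits already fixed in Lemma~\ref{lem3:second derivatives}.

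I expect the only real difficulty to be bookkeeping rather than anything analytic: one must track carefully the factors of $2$ arising from the symmetric cross terms $2F_{ab}h_{11;1}h_{22;1}$, $2w_{ab}h_{11;1}h_{22;1}$, and their analogues, and from the matrix convention above, and one must recombine the two differently-written $(a-b)$-fractions (together with their $(a-c)$- and $(b-c)$-counterparts) into the single fraction $2\tfrac{F_a w_b-F_b w_a}{a-b}$. No case distinction beyond the standing strict-convexity assumption is needed.
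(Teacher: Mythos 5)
Your proof is correct and follows the same route the paper takes: substitute the critical-point identities~\eqref{id:critical point} to eliminate $h_{11;1}$, $h_{22;2}$, $h_{33;3}$ from $\vec{R_w}$, $\vec{S_w}$, $\vec{T_w}$ of Lemma~\ref{lem3:critical point}, then read off the quadratic-form coefficients with the convention $\vec{x}^\top M\vec{x}=m_{11}x_1^2+2m_{12}x_1x_2+m_{22}x_2^2$; your explicit expansion for $\vec{R_w}$ and the recombination of the $(a-b)$- and $(a-c)$-fractions are accurate.

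One bookkeeping caveat on the symmetry shortcut: the cyclic relabelling $a\to b\to c\to a$, $1\to 2\to 3\to 1$ does turn the scalar $\vec{R_w}$ into $\vec{S_w}$ (since $F$ and $w$ are symmetric), but it carries the basis vector $\vec{x_1}=(h_{22;1},h_{33;1})$ to $(h_{33;2},h_{11;2})$, which is $\vec{x_2}$ with its two components swapped. Hence the relabelled $M^{\vec{R_w}}$ equals $M^{\vec{S_w}}$ only after conjugation by the transposition matrix; concretely, the relabelled $m_{22}^{\vec{R_w}}$ gives $m_{11}^{\vec{S_w}}$, the relabelled $m_{11}^{\vec{R_w}}$ gives $m_{22}^{\vec{S_w}}$, and the off-diagonal entry is unchanged. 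Applying the cycle twice sends $\vec{x_1}$ directly to $\vec{x_3}=(h_{11;3},h_{22;3})$ with no swap, so the relabelled $M^{\vec{R_w}}$ is $M^{\vec{T_w}}$ entrywise. This does not affect the validity of your argument, but the sentence ``Applying the relabelling to $M^{\vec{R_w}}$ therefore yields $M^{\vec{S_w}}$'' should be amended accordingly.
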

\begin{proof}
  We use identities \eqref{id:critical point} to replace $h_{11;1}$, $h_{22;2}$, and $h_{33;3}$ in $\vec{R_w}$, $\vec{S_w}$, and $\vec{T_w}$ 
  from Lemma \ref{lem3:critical point}, respectively.
  Now we rewrite the quadratic forms $\vec{R_w}$, $\vec{S_w}$, and $\vec{T_w}$ as $\vec{x}^\top\,M\,\vec{x}$.
  This concludes the proof.
\end{proof}

\begin{remark}[Sufficient conditions for the non-positivity of the linear operator]\label{rem3:sufficient conditions}
  Under the assumptions of Lemma \ref{lem3:critical point}, the linear operator $Lw$ is non-positive at some critical point of $w$, 
  if $\vec{C_w}$, $\vec{E_w}$ are non-positive, and $M^\vec{R_w}$, $M^\vec{S_w}$, and $M^\vec{T_w}$ are negative semi-definite there.
  This is a direct consequence of Lemma \ref{lem3:critical point}, and Corollary \ref{cor3:critical point}. \\

  Now let $M \in \R^{2\times 2}$ be a symmetric matrix. 
  Then we have the equivalent conditions
  \begin{enumerate}
    \item $M=\begin{pmatrix} m_{11} & m_{12} \\ m_{12} & m_{22} \end{pmatrix}$ is negative semi-definite,
    \item $\tr M=m_{11}+m_{22} \leq 0$, and $-\det M=m_{12}^2-m_{11}\,m_{22} \leq 0$.
  \end{enumerate}
  $\left. \right.$ \\
  In $[\textsc{cp}]$, we check the non-positivity of the linear operator $Lw$ 
  by checking the non-positivity of $\vec{C_w}$, $\vec{E_w}$, and by checking 
  condition $(2)$ for the matrices $M^\vec{R_w}$, $M^\vec{S_w}$, and $M^\vec{T_w}$.
\end{remark}

\section{Vanishing functions}

In $\R^{n+1}$, $n=2$, for many normal velocities $F$ the quantity
\begin{align*}
  \frac{\left(\lambda_1-\lambda_2\right)^2}{\left(\lambda_1\,\lambda_2\right)^2} F^2
\end{align*}
seems to be the natural choice when showing convergence to a round point.
As in \cite{mf:when}, we call this quantity a \textit{vanishing function} for a normal velocity $F$.
It is used by B. Andrews for the Gauss curvature flow \cite{ba:gauss}, 
by F. Schulze and O. Schn\"urer for the $H^\sigma$-flow \cite{fs:convexity}, 
by B. Andrews and X. Chen for the $|A|^\sigma$ and the $\tr A^\sigma$-flow \cite{ac:surfaces}. 

First we give the Definition \ref{def3:vanishing function} of a vanishing function in $\R^{n+1}$, $n=3$.
In Remark \ref{rem3:vanishing function} we then introduce 
\begin{align*}
  \sum_{i<j} \frac{\left(\lambda_i-\lambda_j\right)^2}{\left(\lambda_i\,\lambda_j\right)^2} F^2
\end{align*}
as the counterpart of a vanishing function for arbitrary dimensions $n\in\N$.
In this paper we work with the quantity in particular in $\R^{n+1}$, $n=3$.

In Lemma \ref{lem3:vanishing function} we deduce a simple but interesting estimate 
for vanishing functions for arbitrary dimensions $n\in\N$.
We employ this Lemma \ref{lem3:vanishing function}
in the proof of our main Theorem \ref{thm3:H3}.

\begin{definition}[Vanishing function]\label{def3:vanishing function}
  Let $v(a,b,c)\in C^2\left(\R^3_{+}\right)$ with $v\not\equiv 0$.
  Let $\vec{C_w}(a,b,c)$ be defined as in Lemma \ref{lem3:critical point}.
  We call $v$ a \textit{vanishing function} for a normal velocity $F$
  if $\vec{C_v}(a,b,c) = 0$ for all $0<a,b,c$.
\end{definition}

\begin{example}[Vanishing function]\label{exm3:vanishing function}
  We have the following example of a vanishing function for a normal velocity $F$:
  \begin{align*}
    \left( \frac{(a-b)^2}{(a\,b)^2} + \frac{(a-c)^2}{(a\,c)^2} + \frac{(b-c)^2}{(b\,c)^2} \right) F^2.
  \end{align*}
\end{example}

\begin{remark}[Vanishing function]\label{rem3:vanishing function}
  We can define a \textit{vanishing function} for arbitrary dimensions $n\in\N$.
  Let $\lambda_i$, $i=1,\ldots,n$, denote the principal curvatures
  of a hypersurface in $\R^{n+1}$.
  Using Lemma \ref{lem3:linear operator} we can define 
  constant terms $\vec{C_w}(\lambda_1,\ldots,\lambda_n)$ 
  as in Lemma \ref{lem3:critical point} for an arbitrary $n\in\N$.
  We have the following example of a vanishing function:
  \begin{align}\label{id3:vanishing function}
    \sum_{i<j} \frac{\left(\lambda_i-\lambda_j\right)^2}{\left(\lambda_i\,\lambda_j\right)^2} F^2.
  \end{align}
  Interestingly, we still obtain a vanishing function if we omit up to $n-1$ terms of the form
  \begin{align*}
    \frac{\left(\lambda_i-\lambda_j\right)^2}{\left(\lambda_i\,\lambda_j\right)^2} F^2.
  \end{align*}
  This reminds us of \cite[Theorem 1.5]{hs:mean} by G. Huisken and C. Sinestrari.
\end{remark}

\begin{lemma}[Vanishing function]\label{lem3:vanishing function}
  Let $v$ be a vanishing function as defined in Remark \ref{rem3:vanishing function}.
  Let $v\leq C^2$ on some set $\Sc\subset\R^n_{+}$, and for some constant $C>0$. \\
  Then we have
  \begin{align}
    1 \leq \frac{\lambda_{\text{max}}}{\lambda_{\text{min}}} \leq 1 + C \frac{\lambda_{\text{max}}}{F} \qquad \text{on}\;\;\Sc.
  \end{align}
\end{lemma}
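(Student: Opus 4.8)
The plan is to show that the single term $\dfrac{(\lambda_{\max}-\lambda_{\min})^2}{(\lambda_{\max}\,\lambda_{\min})^2}\,F^2$ alone already controls the pinching ratio, and that this term is bounded above by the vanishing function $v$, hence by $C^2$. Fix a point of $\Sc$ and let $\lambda_{\max}$ and $\lambda_{\min}$ be the largest and smallest principal curvatures there. Since all $\lambda_i>0$, every summand in $v=\sum_{i<j}\frac{(\lambda_i-\lambda_j)^2}{(\lambda_i\lambda_j)^2}F^2$ is nonnegative, so in particular
\begin{align*}
  \frac{(\lambda_{\max}-\lambda_{\min})^2}{(\lambda_{\max}\,\lambda_{\min})^2}\,F^2 \;\le\; v \;\le\; C^2 .
\end{align*}
Taking square roots (all quantities nonnegative, $F>0$),
\begin{align*}
  \frac{\lambda_{\max}-\lambda_{\min}}{\lambda_{\max}\,\lambda_{\min}}\,F \;\le\; C .
\end{align*}

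Now divide through by $\lambda_{\min}$ and multiply by $\lambda_{\max}/F$: the left side becomes $\frac{\lambda_{\max}-\lambda_{\min}}{\lambda_{\min}}=\frac{\lambda_{\max}}{\lambda_{\min}}-1$, while the right side becomes $C\,\frac{\lambda_{\max}}{F}$. Rearranging gives
\begin{align*}
  \frac{\lambda_{\max}}{\lambda_{\min}} \;\le\; 1 + C\,\frac{\lambda_{\max}}{F}
\end{align*}
on $\Sc$, and the lower bound $1\le \lambda_{\max}/\lambda_{\min}$ is immediate from the definitions. This is the claimed chain of inequalities.

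I do not expect any real obstacle here: the statement is essentially an algebraic manipulation, and the only subtlety is making sure one uses the right single summand of $v$, namely the one indexed by the pair $(i,j)$ realizing the max and the min. The positivity (strict convexity) of the principal curvatures is what makes all denominators legitimate and all terms of $v$ nonnegative, so that dropping all but one summand only decreases the value — that is the one place where the hypothesis $\Sc\subset\R^n_+$ is genuinely used. Note also that the definition of "vanishing function" plays no role in the proof; only the explicit form \eqref{id3:vanishing function} from Remark \ref{rem3:vanishing function} is needed, together with the bound $v\le C^2$.
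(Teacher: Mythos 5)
Your proof is correct and follows exactly the same approach as the paper: drop all summands of the vanishing function except the one for the extremal pair $(\lambda_{\max},\lambda_{\min})$, take square roots, and multiply by $\lambda_{\max}/F$ to isolate the pinching ratio. (One trivial slip: the final manipulation is not ``divide by $\lambda_{\min}$ and multiply by $\lambda_{\max}/F$'' but simply multiplying by $\lambda_{\max}/F$; the inequality you write down is nevertheless the correct one.)
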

\begin{proof}
  We assume $\lambda_{\text{min}} \equiv \lambda_1 \leq \ldots \leq \lambda_n \equiv \lambda_{\max}$ and obtain
  \begin{align*}
    C^2 \geq \sum_{i<j} \frac{\left(\lambda_i-\lambda_j\right)^2}{\left(\lambda_i\,\lambda_j\right)^2} F^2
        \geq \frac{\left(\lambda_n-\lambda_1\right)^2}{\left(\lambda_1\,\lambda_n\right)^2} F^2,
  \end{align*}
  which implies
  \begin{align*}
    C \frac{\lambda_n}{F} \geq \frac{\lambda_n\left(\lambda_n-\lambda_1\right)}{\lambda_1\,\lambda_n} = \frac{\lambda_n}{\lambda_1}-1\geq 0.
  \end{align*}
  This concludes the proof.
\end{proof}

\section{$H^3$-flow}

The proof of our main Theorem \ref{thm3:H3} is based on investigating
\begin{align*}
  \phi_{H^3} =&\, \frac{(a-b)^2+(a-c)^2+(b-c)^2}{(a+b+c)^2}, \\
  \text{and }\quad \psi_{H^3} =&\,  \left( \frac{(a-b)^2}{(a\,b)^2} + \frac{(a-c)^2}{(a\,c)^2} + \frac{(b-c)^2}{(b\,c)^2} \right) \left(H^3\right)^2.
\end{align*}
The quantity $\phi_{H^3}$ is inspired by the quantity used in \cite{gh:flow} by G. Huisken.
The other quantity $\psi_{H^3}$ is a vanishing function.
First we show that the estimate $\phi_{H^3}\leq h := 1/8$ is preserved during the $H^3$-flow  
if the initial hypersurface $M_0$ is $2$-pinched. This is Lemma \ref{lem3:phi} and Corollary \ref{cor3:phi}.
Next we prove that $\psi_{H^3}$ is bounded in time on the set where $\phi_{H^3}\leq h$.
This is Lemma \ref{lem3:psi} and Corollary \ref{cor3:psi}.

The proofs of these Lemmas and Corollaries involve the maximum-principle, the linear operator $L$ and our computer program $[\textsc{cp}]$.
In $[\textsc{cp}]$ we deal with computations of two kinds. 
One kind is the purely algebraic manipulation of terms, and could still be performed by pen and paper.
The other kind of computations includes random numbers for a Monte-Carlo method, which appears to be very tedious to carry out with pen and paper.

Finally, we show convergence to a round point combining the boundedness of $\psi_{H^3}$ and the proof of \cite[Theorem A.1.]{fs:convexity} 
by F. Schulze and O. Schn\"urer. This is our main Theorem \ref{thm3:H3}.

\begin{lemma}[Monotone quantity $\phi$]\label{lem3:phi}
  Let $\left(M_t\right)_{0\leq t < T}$ be a maximal solution of the $H^3$-flow, 
  where $M_0$ is $2$-pinched. 
  Then we have 
  \begin{align*}
    L\phi \leq 0
  \end{align*}
  at a critical point of $\phi$, where $0<\phi \leq 1/8 =: h$.
\end{lemma}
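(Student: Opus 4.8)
The plan is to feed $w=\phi:=\phi_{H^3}$ and $F=H^3$ into Corollary~\ref{cor3:critical point} and Remark~\ref{rem3:sufficient conditions}, which reduces the assertion to five scalar sign conditions on the pinching region
\[
  \Sc := \bigl\{(a,b,c)\in\R^3_{+} \;:\; 0<\phi(a,b,c)\leq \tfrac18\bigr\},
\]
namely $\vec{C_\phi}\leq 0$, $\vec{E_\phi}\leq 0$, and $\tr M^{\bullet}\leq 0$, $\det M^{\bullet}\geq 0$ for each $\bullet\in\{\vec{R_\phi},\vec{S_\phi},\vec{T_\phi}\}$. Since $\phi$ is homogeneous of degree $0$ and $H^3$ of degree $3$, every one of these quantities is homogeneous, so its sign depends only on the ray through $(a,b,c)$; the verification therefore takes place on the \emph{compact} two-dimensional set $\Sc\cap\{a+b+c=1\}=\{H=1,\ |A|^2\leq 3/8\}\cap\R^3_{+}$. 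Symmetry of $\phi$ and $H^3$ in $(a,b,c)$, together with the fact that $\vec{R_\phi},\vec{S_\phi},\vec{T_\phi}$ differ only by permuting the roles of $a,b,c$, means it is enough to handle $\vec{C_\phi}$, $\vec{E_\phi}$, and the single pair $(\tr M^{\vec{R_\phi}},\det M^{\vec{R_\phi}})$.

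The reaction term $\vec{C_\phi}$ and the gradient coefficient $\vec{E_\phi}$ can be treated by hand, and in fact need no pinching. Using $F_a=F_b=F_c=3H^2$ the bracketed factors in Lemma~\ref{lem3:critical point} collapse to $3H^2|A|^2-2\lambda_i H^3$, so that $\vec{C_\phi}=3H^2|A|^2\sum_i\lambda_i\phi_{\lambda_i}-2H^3\sum_i\lambda_i^2\phi_{\lambda_i}$; the first sum vanishes by Euler's identity for the degree-$0$ function $\phi$, and inserting $\phi=(3|A|^2-H^2)/H^2$ in the second gives $\vec{C_\phi}=12\bigl(|A|^4-H\,\tr A^3\bigr)$, which is $\leq 0$ on all of $\R^3_{+}$ by the Cauchy--Schwarz inequality applied to the vectors $(\lambda_i^{1/2})$ and $(\lambda_i^{3/2})$. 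Likewise, since all first derivatives of $H^3$ agree, the differences $F_a-F_b$ etc.\ in $\vec{E_\phi}$ drop out, and because $(\phi_a-\phi_b)/(a-b)=6/H^2$ cyclically, $\vec{E_\phi}$ reduces to the negative constant $-108$.

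The substantive part is the negative semi-definiteness of the three $2\times 2$ matrices, and here the bound $\phi\leq 1/8$ is genuinely used. Substituting $F=H^3$ (so $F_a=F_b=F_c=3H^2$, $F_{aa}=F_{ab}=\dots=6H$) into Corollary~\ref{cor3:critical point}, together with the explicit first and second derivatives of $\phi$ and the critical-point relations \eqref{id:critical point}, turns each entry $m^{\vec{R_\phi}}_{ij}$ into an explicit homogeneous rational function of $(a,b,c)$ with strictly positive denominator; clearing denominators, $\tr M^{\vec{R_\phi}}\leq 0$ and $\det M^{\vec{R_\phi}}\geq 0$ become polynomial inequalities to be verified on $\{H=1,\ |A|^2\leq 3/8\}\cap\R^3_{+}$ (and symmetrically for $\vec{S_\phi}$, $\vec{T_\phi}$). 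My plan is to carry out this symbolic reduction and then certify the resulting inequalities on $\Sc$ — this is exactly the task handled by the computer program $[\textsc{cp}]$, which samples $\Sc$ by a Monte--Carlo method and evaluates $\tr M^{\bullet}$ and $-\det M^{\bullet}$.

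The main obstacle is precisely this last verification. The matrix entries involve the second derivatives $w_{aa},w_{ab},w_{bb}$ of $\phi$, so after clearing denominators one faces genuinely high-degree polynomial inequalities in two variables, and negative semi-definiteness is tightest along the boundary $\phi=1/8$ and at the degenerate loci where two curvatures coincide (there the difference quotients must be read as limits) — so either a careful cylindrical-algebraic-decomposition or sum-of-squares certificate, or, as in $[\textsc{cp}]$, a sufficiently fine Monte--Carlo sweep, is needed to confirm that $1/8$ is an admissible pinching threshold. Once all five sign conditions are established on $\Sc$, Lemma~\ref{lem3:critical point}, Corollary~\ref{cor3:critical point} and Remark~\ref{rem3:sufficient conditions} yield $L\phi\leq 0$ at every critical point of $\phi$ with $0<\phi\leq 1/8$, which is the claim.
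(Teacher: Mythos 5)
Your approach matches the paper's: reduce $L\phi\leq 0$ to the five sign conditions of Remark~\ref{rem3:sufficient conditions} applied to the decomposition in Lemma~\ref{lem3:critical point} and Corollary~\ref{cor3:critical point}, project to the simplex $\{a+b+c=1\}$ by homogeneity, and then delegate the verification over $\Sc_h=\{0<\phi\leq 1/8\}$ to the computer program $[\textsc{cp}]$ via a Monte--Carlo sweep. The paper's proof literally states only this reduction and the set inclusions $\Sc_{C_2}\subset\Sc_h\subset\Sc_{L\phi}$, handing all five conditions to $[\textsc{cp}]$ wholesale.

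Where you go beyond the paper is in treating $\vec{C_\phi}$ and $\vec{E_\phi}$ by hand. Your computations are correct: from $\phi=(3|A|^2-H^2)/H^2$ and Euler's relation for a degree-$0$ function, one gets $\vec{C_\phi}=12(|A|^4-H\,\tr A^3)\leq 0$ on all of $\R^3_+$ by Cauchy--Schwarz; and since $F_a=F_b=F_c=3H^2$ and $(\phi_a-\phi_b)/(a-b)=6/H^2$ cyclically, $\vec{E_\phi}=-108$. This shows the pinching threshold $h=1/8$ plays no role for the reaction and $h_{12;3}$-terms — it is needed only for the semi-definiteness of $M^{\vec{R_\phi}},M^{\vec{S_\phi}},M^{\vec{T_\phi}}$, exactly the part you (like the paper) hand to $[\textsc{cp}]$. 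Your remark that a genuine sum-of-squares or CAD certificate would be preferable to a Monte--Carlo sweep is a fair caveat, but it applies equally to the paper's own proof and does not affect the comparison. Your observation that $\vec{S_\phi},\vec{T_\phi}$ follow from $\vec{R_\phi}$ by permuting $a,b,c$ (since both $\phi$ and $H^3$ are symmetric) is also a valid shortcut. In short: correct, same route, with a welcome portion of the verification made explicit.
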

\begin{proof}
  Let $\Sc_{C_2}$ be the $2$-pinched cone in the positive orthant.
  In $[\textsc{cp}]$, we compute
  \begin{align*}
    \Sc_{C_2} :=&\, \{(\lambda_1,\lambda_2,\lambda_3) \in \R^3_{+} : \lambda_i/\lambda_j \leq 2 \text{ for all } 1\leq i,j\leq 3 \} \\
    \Sc_h :=&\, \{(a,b,c) \in \R^3_{+} : 0 <\phi \leq 1/8 = h \}, \\
    \Sc_{L\phi} :=&\, \{(a,b,c) \in \R^3_{+} : L\phi \leq 0 \}
  \end{align*}
  and show that
  \begin{align*}
    \Sc_{C_2} \subset \Sc_h \subset \Sc_{L\phi}
  \end{align*}
  using a Monte-Carlo method.
  Here, we check the non-positivity of $L\phi$ as described in Remark \ref{rem3:sufficient conditions}. \\
  Since the functions $\lambda_i/\lambda_j$, $\phi$, and $L\phi$ are homogeneous in the principal curvatures,
  it suffices to compute the sets $\Sc_{C_2}$, $\Sc_h$, and $\Sc_{L\phi}$ in $[\textsc{cp}]$
  for the radial projection
  \begin{align*}
    \pi: \R^3_{+} \to \{a+b+c=1\},\quad(a,b,c) \mapsto (a,b,c)/(a+b+c).
  \end{align*}
  This concludes the computer-based proof.  
\end{proof}

\begin{corollary}[Monotone quantity $\phi$]\label{cor3:phi}
  Let $\left(M_t\right)_{0\leq t < T}$ be a maximal solution of the $H^3$-flow, 
  where $M_0$ is $2$-pinched. 
  Then we have that 
  \begin{align*}
    \phi \leq h := 1/8
  \end{align*} 
  during the $H^3$-flow.
\end{corollary}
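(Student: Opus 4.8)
The idea is to upgrade the pointwise inequality of Lemma~\ref{lem3:phi} to a statement that holds along the whole flow, by means of the parabolic maximum principle in the form of Hamilton's lemma on the evolution of a spatial maximum. Two standing facts enter: the solution remains strictly convex, so that the principal curvatures stay positive and $\phi=\phi_{H^3}$ is a smooth function on each $M_t$ (note that $\phi\ge 0$, with $\phi=0$ exactly at umbilic points); and $F=H^3$ satisfies $F^{ij}=3H^2\,g^{ij}$, which is positive definite. Moreover, since $M_0$ is $2$-pinched, the inclusion $\Sc_{C_2}\subset\Sc_h$ established in the proof of Lemma~\ref{lem3:phi} shows that $\phi\le h=1/8$ holds on $M_0$; the claim is that this is preserved for all $t\in[0,T)$.

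Set $\phi_{\max}(t):=\max_{M_t}\phi(\cdot,t)$. Since $M_t$ is compact and $\phi$ depends smoothly on $(x,t)$, $\phi_{\max}$ is locally Lipschitz, and by the standard lemma on the time derivative of a maximum over a compact manifold one has, for a.e.\ $t$, $\frac{d}{dt}\phi_{\max}(t)=\frac{d}{dt}\phi(p_t,t)$ for any $p_t\in M_t$ attaining the maximum. At such a point $\phi$ has vanishing covariant gradient and negative semi-definite covariant Hessian, so $F^{ij}\phi_{;ij}(p_t,t)\le 0$ because $F^{ij}$ is positive definite; using Definition~\ref{def3:linear operator},
\begin{align*}
  \frac{d}{dt}\phi_{\max}(t)=\frac{d}{dt}\phi(p_t,t)=L\phi(p_t,t)+F^{ij}\phi_{;ij}(p_t,t)\le L\phi(p_t,t).
\end{align*}
Now $p_t$ is in particular a critical point of $\phi$, so whenever $0<\phi_{\max}(t)\le h$ the hypotheses of Lemma~\ref{lem3:phi} are met at $p_t$ and give $L\phi(p_t,t)\le 0$; hence $\frac{d}{dt}\phi_{\max}(t)\le 0$ as long as $\phi_{\max}(t)\in(0,h]$ (and if $\phi_{\max}(t)=0$ the surface is umbilic there, so $\phi\le h$ trivially). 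An elementary ODE comparison, starting from $\phi_{\max}(0)\le h$ and using that $\phi_{\max}$ is non-increasing whenever it lies at or below the threshold $h$, then yields $\phi_{\max}(t)\le h$ for all $t\in[0,T)$, which is the assertion.

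The one delicate point — and the only place I expect any technical nuisance — is the bookkeeping exactly at the level $\phi_{\max}=h$, where one must exclude that $\phi_{\max}$ escapes upward with vanishing right derivative. This is handled in the usual way for such barrier arguments, exploiting that the non-positivity of $L\phi$ verified by $[\textsc{cp}]$ is not confined to $\Sc_h$ but persists on a slightly larger region $\{0<\phi\le h+\delta\}$ (the $2$-pinched cone sitting comfortably inside $\Sc_{L\phi}$), which makes the comparison above airtight; equivalently one replaces $\phi$ by a strict supersolution and lets the perturbation tend to zero. No further geometric input is needed: the substantive content, namely the sign of $L\phi$ on the relevant cone, is precisely Lemma~\ref{lem3:phi}.
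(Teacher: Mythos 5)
Your proof is correct and takes the same route as the paper, which simply invokes Lemma~\ref{lem3:phi} together with the parabolic maximum principle; you have merely spelled out the Hamilton trick and the ODE comparison that the paper leaves implicit. The extra care you take at the threshold $\phi_{\max}=h$, observing that the computer verification in Lemma~\ref{lem3:phi} actually places $\Sc_h$ strictly inside $\Sc_{L\phi}$ so that $L\phi\le 0$ persists on a slightly larger cone, is a genuine refinement the paper glosses over but is fully consistent with its computation.
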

\begin{proof}
  This follows directly from Lemma \ref{lem3:phi} using the maximum-principle.
\end{proof}

\begin{lemma}[Monotone quantity $\psi$]\label{lem3:psi}
  Let $\left(M_t\right)_{0\leq t < T}$ be a maximal solution of the $H^3$-flow, 
  where $M_0$ is $2$-pinched. 
  Then we have
  \begin{align*}
    L\psi \leq 0
  \end{align*}
  at a critical point of $\psi$, where $\psi>0$.
\end{lemma}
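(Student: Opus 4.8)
The plan is to follow the same structure as the proof of Lemma \ref{lem3:phi}, reducing the claim $L\psi \leq 0$ at a critical point of $\psi$ (with $\psi > 0$) to a finite collection of pointwise algebraic inequalities in the principal curvatures $a,b,c$, then verifying those inequalities with the computer program $[\textsc{cp}]$ via a Monte-Carlo search on the simplex $\{a+b+c=1\}$. The first reduction step is to observe that $\psi = \psi_{H^3}$ is homogeneous in $(a,b,c)$, as are $L\psi$ and $\phi$, so that all relevant sets descend to the radial projection $\pi\colon \R^3_+ \to \{a+b+c=1\}$; this is exactly the normalization used in Lemma \ref{lem3:phi}. By Corollary \ref{cor3:phi}, along the $H^3$-flow with $2$-pinched $M_0$ we have $\phi \leq h = 1/8$, so it suffices to establish $L\psi \leq 0$ at critical points of $\psi$ lying over the set $\Sc_h = \{0 < \phi \leq 1/8\}$ (equivalently, it is enough to show the inclusion $\Sc_h \subset \Sc_{L\psi}$, where $\Sc_{L\psi}$ is defined analogously to $\Sc_{L\phi}$).

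Next I would invoke Lemma \ref{lem3:critical point} and Corollary \ref{cor3:critical point} with $w = \psi$ and $F = H^3$. At a critical point of $\psi$ in normal coordinates we get the decomposition
\begin{align*}
  L\psi = \vec{C_\psi} + \vec{E_\psi}\,h_{12;3}^2 + \vec{x_1}^\top M^{\vec{R_\psi}} \vec{x_1} + \vec{x_2}^\top M^{\vec{S_\psi}} \vec{x_2} + \vec{x_3}^\top M^{\vec{T_\psi}} \vec{x_3},
\end{align*}
where the vectors $\vec{x_1},\vec{x_2},\vec{x_3}$ encode the independent components of the gradient of the second fundamental form after eliminating $h_{11;1},h_{22;2},h_{33;3}$ through the critical-point identities \eqref{id:critical point}. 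Since $\psi$ is by Example \ref{exm3:vanishing function} (and Remark \ref{rem3:vanishing function}) a vanishing function for every normal velocity, we have $\vec{C_\psi} \equiv 0$ on all of $\R^3_+$, so the constant term drops out entirely — this is the point of having chosen a vanishing function. It then remains, by Remark \ref{rem3:sufficient conditions}, to verify on $\Sc_h$ that $\vec{E_\psi} \leq 0$ and that each of the three symmetric $2\times 2$ matrices $M^{\vec{R_\psi}}, M^{\vec{S_\psi}}, M^{\vec{T_\psi}}$ is negative semi-definite, the latter checked through the trace/determinant criterion $\tr M \leq 0$ and $m_{12}^2 - m_{11}m_{22} \leq 0$ stated in Remark \ref{rem3:sufficient conditions}. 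All the entries $m_{ij}^{\bullet}$ and the scalar $\vec{E_\psi}$ are explicit rational functions of $a,b,c$ obtained by substituting $F = H^3$ and $w = \psi_{H^3}$ into the formulas of Corollary \ref{cor3:critical point}; after clearing denominators these become polynomial inequalities, which $[\textsc{cp}]$ evaluates at a large number of random points on the projected $2$-pinched-dominating region $\pi(\Sc_h)$.

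The main obstacle I anticipate is not conceptual but computational robustness: the expressions for $M^{\vec{R_\psi}}$, $M^{\vec{S_\psi}}$, $M^{\vec{T_\psi}}$ and $\vec{E_\psi}$ for $w = \psi_{H^3}$ are considerably more intricate than those for $\phi_{H^3}$ (second derivatives of a degree-$0$-times-$H^6$ rational function in three variables), and there is genuine risk of cancellations or sign subtleties near the diagonal $a=b=c$, where several difference quotients $\tfrac{F_a - F_b}{a-b}$, $\tfrac{w_a - w_b}{a-b}$ must be interpreted as limits as in Lemma \ref{lem3:second derivatives}. I would handle the diagonal and near-diagonal locus separately — either by a Taylor expansion argument showing the relevant matrices are negative semi-definite there, or by noting that on a neighborhood of $a=b=c$ the quantity $\psi_{H^3}$ has no critical points with $\psi_{H^3} > 0$ other than those already covered — and reserve the Monte-Carlo verification for the bulk of $\Sc_h$ bounded away from the diagonal, exactly as in the computer-based proof of Lemma \ref{lem3:phi}. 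This concludes the proof outline.
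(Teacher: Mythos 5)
Your proposal matches the paper's proof: both reduce via Corollary \ref{cor3:phi} to verifying the inclusion $\Sc_h \subset \Sc_{L\psi}$, decompose $L\psi$ at a critical point through Lemma \ref{lem3:critical point} and Corollary \ref{cor3:critical point}, and delegate the non-positivity checks (constant term, $\vec{E_\psi}$, and the trace/determinant criterion of Remark \ref{rem3:sufficient conditions} for the three $2\times 2$ blocks) to the Monte-Carlo computation in $[\textsc{cp}]$ on the radially projected simplex. Your extra observations — that $\vec{C_\psi}$ vanishes identically because $\psi$ is a vanishing function, and that the near-diagonal limits of the difference quotients deserve care — are correct elaborations of detail the paper leaves implicit, not a different route.
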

\begin{proof}
  By Corollary \ref{cor3:phi}, we have
  \begin{align*}
    \phi \leq h
  \end{align*}
  during the $H^3$-flow.
  Let $\Sc_{C_2}$, $\Sc_h$ be defined as in Lemma \ref{lem3:phi}. 
  In $[\textsc{cp}]$, we also compute 
  \begin{align*}
    \Sc_{L\psi} := \{(a,b,c) \in \R^3_{+} : L\psi \leq 0 \}
  \end{align*}
  and show in particular the second inclusion of
  \begin{align*}
    \Sc_{C_2} \subset \Sc_h \subset \Sc_{L\psi}
  \end{align*}
  using a Monte-Carlo method. By Lemma \ref{lem3:phi}, we have the first conclusion.
  
  This concludes the computer-based proof.
\end{proof}

\begin{corollary}[Monotone quantity $\psi$]\label{cor3:psi}
  Let $\left(M_t\right)_{0\leq t < T}$ be a maximal solution of the $H^3$-flow, 
  where $M_0$ is $2$-pinched. 
  Then we have that 
  \begin{align*}
    \max_{M_t}\,\psi
  \end{align*}
  is non-increasing during the $H^3$-flow.
\end{corollary}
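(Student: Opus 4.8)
The plan is to run the classical parabolic maximum principle for the scalar $\psi$ on the moving hypersurfaces $M_t$, with Lemma~\ref{lem3:psi} as the only geometric input. Write $\psi_{\max}(t):=\max_{M_t}\psi$. Since $\psi_{H^3}$ is a sum of squares multiplied by $(H^3)^2\ge 0$, we have $\psi\ge 0$ on each $M_t$; if $\psi_{\max}$ vanishes on some interval then $\psi\equiv 0$ there, $M_t$ is a round sphere, and there is nothing to prove. Hence it suffices to establish $\tfrac{d}{dt}\psi_{\max}(t)\le 0$ at every time $t$ with $\psi_{\max}(t)>0$ at which $\psi_{\max}$ is differentiable; the usual continuity argument then upgrades this to monotonicity on all of $[0,T)$ (if $\psi_{\max}$ were not non-increasing there would be $t_1<t_2$ with $0<\psi_{\max}(t_1)<\psi_{\max}(t_2)$, and on $[\tau,t_2]$ with $\tau:=\sup\{t\le t_2:\psi_{\max}(t)\le\tfrac12(\psi_{\max}(t_1)+\psi_{\max}(t_2))\}$ the function $\psi_{\max}$ stays strictly positive yet increases in net, contradicting the infinitesimal estimate).

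Next I would invoke Hamilton's lemma on differentiating a maximum: $t\mapsto\psi_{\max}(t)$ is locally Lipschitz, hence differentiable almost everywhere, and at a point of differentiability one has $\tfrac{d}{dt}\psi_{\max}(t)=\tfrac{\partial}{\partial t}\psi(x_t,t)$ for a point $x_t\in M_t$ realizing the maximum. At such a spatial maximum $\psi_{;i}(x_t,t)=0$ for all $i$, so $x_t$ is a critical point of $\psi$ in the sense of Lemma~\ref{lem3:psi}, and the spatial Hessian is negative semi-definite, $\big(\psi_{;ij}(x_t,t)\big)\le 0$. For $F=H^3$ on a strictly convex hypersurface one has $F^{ij}=3H^2 g^{ij}$, which is positive definite, so $F^{ij}\psi_{;ij}(x_t,t)\le 0$. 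Combining with $L\psi=\tfrac{d}{dt}\psi-F^{ij}\psi_{;ij}$ and with Lemma~\ref{lem3:psi} (which yields $L\psi\le 0$ at this critical point, where $\psi>0$) gives
\[
  \frac{\partial}{\partial t}\psi(x_t,t)=L\psi(x_t,t)+F^{ij}\psi_{;ij}(x_t,t)\le 0 ,
\]
hence $\tfrac{d}{dt}\psi_{\max}\le 0$, and $\psi_{\max}$ is non-increasing during the $H^3$-flow.

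The delicate points are regularity-theoretic rather than computational, and I expect them to be the main (though minor) obstacle. One must check that $\psi$ is genuinely $C^2$ on the strictly convex locus — it is, being a symmetric expression in the $h_{ij}$ and in the inverse $\tilde h^{ij}$, both smooth there — so that Lemma~\ref{lem3:critical point} and thus Lemma~\ref{lem3:psi} apply at the maximum point $x_t$; that the coordinate-free sign of $F^{ij}\psi_{;ij}$ at $x_t$ matches the one implicit in the normal coordinates of Lemma~\ref{lem3:critical point}; and that strict convexity (hence positive definiteness of $F^{ij}$) is preserved along the flow, which follows from the pinching estimates already in place. Modulo these standard checks the corollary is the textbook ``evolution of the maximum'' computation.
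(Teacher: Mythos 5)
Your proof is correct and is exactly the expansion of the paper's one‑line argument, which reads ``This follows directly from Lemma~\ref{lem3:psi} using the maximum-principle.'' You have simply spelled out the standard Hamilton's‑trick computation (critical point, negative semi‑definite Hessian, positive definiteness of $F^{ij}=3H^2g^{ij}$ for the $H^3$‑flow on a convex surface, and $\frac{d}{dt}\psi = L\psi + F^{ij}\psi_{;ij}\le 0$) that the word ``maximum‑principle'' abbreviates; no difference in substance.
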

\begin{proof}
  This follows directly from Lemma \ref{lem3:psi} using the maximum-principle.
\end{proof}

\begin{theorem}[$H^3$-flow]\label{thm3:H3}
 Let $\left(M_t\right)_{0\leq t < T}$ be a maximal solution of the $H^3$-flow, 
 where $M_0$ is $2$-pinched. 
 Then $\left(M_t\right)_{0\leq t < T}$ converges to a round point.
\end{theorem}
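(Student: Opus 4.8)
The plan is to combine the two monotone quantities of Corollaries~\ref{cor3:phi} and~\ref{cor3:psi} with the blow-up argument behind \cite[Theorem~A.1.]{fs:convexity}. I would start with the existence theory: for $F=H^3$ one has $F^{ij}=3H^2g^{ij}$, which is positive definite on strictly convex hypersurfaces, so \eqref{eq3:evol} is parabolic and $M_0$ evolves to a unique smooth solution on a maximal interval $[0,T)$. Enclosing $M_0$ in a sphere of radius $R$, which under the $H^3$-flow contracts to a point in finite time, the avoidance principle gives $T<\infty$ and keeps all $M_t$ inside a fixed ball; moreover $\max_{M_t}|A|\to\infty$ as $t\to T$, since bounded curvature together with interior estimates for \eqref{eq3:evol} would continue the flow past $T$.

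Next I would feed in the pinching. By Corollary~\ref{cor3:phi}, $\phi_{H^3}\leq h=1/8$ throughout the flow, and the set $\{(a,b,c)\in\R^3_{+}:\phi_{H^3}\leq 1/8\}$ projects radially onto a compact subset of the open simplex $\{a+b+c=1\}$; hence there is a numerical constant $\Lambda$ with $\lambda_{\max}/\lambda_{\min}\leq\Lambda$ on every $M_t$, so the $M_t$ stay uniformly convex in a scale-invariant sense. By Corollary~\ref{cor3:psi}, $\psi_{H^3}\leq C^2:=\max_{M_0}\psi_{H^3}<\infty$ throughout; since $\psi_{H^3}$ is a vanishing function, Lemma~\ref{lem3:vanishing function} gives, pointwise on each $M_t$,
\begin{align*}
  1\;\leq\;\frac{\lambda_{\max}}{\lambda_{\min}}\;\leq\; 1+C\,\frac{\lambda_{\max}}{H^{3}}\;\leq\;1+\frac{C}{\lambda_{\max}^{2}},
\end{align*}
using $H\geq\lambda_{\max}$ in the last step.

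It then remains to show the right-hand side tends to $1$, \ie $\min_{M_t}\lambda_{\max}\to\infty$, and to identify the limit. Because the pinching excludes degenerate sublimits (a collapsing ``rim'' or ``cigar'' would violate $\lambda_{\max}/\lambda_{\min}\leq\Lambda$), the $M_t$ contract to a single point $p$, and then the pinching forces $\min_{M_t}H\to\infty$, hence $\min_{M_t}\lambda_{\max}\geq\tfrac13\min_{M_t}H\to\infty$. Thus $\sup_{M_t}\lambda_{\max}/\lambda_{\min}\to 1$ as $t\to T$. Rescaling $M_t-p$ by $\max_{M_t}\lambda_{\max}$ and reparametrising time produces a flow $\widetilde M_\tau$ of unit curvature scale; the uniform pinching makes the rescaled evolution uniformly parabolic, so interior estimates for the fully nonlinear parabolic equation yield uniform $C^{\infty}$ bounds on $\widetilde M_\tau$ over time intervals of fixed length, and Arzel\`a--Ascoli produces smooth subsequential limits, each a closed convex hypersurface with $\widetilde\lambda_{\max}\equiv\widetilde\lambda_{\min}$, \ie a round sphere. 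Exactly as in the proof of \cite[Theorem~A.1.]{fs:convexity}, the monotonicity of $\psi_{H^3}$ from Corollary~\ref{cor3:psi} together with a linearisation of the rescaled flow about the limiting sphere promotes this subconvergence to genuine smooth convergence, and unwinding the rescaling shows that $(M_t)_{0\leq t<T}$ shrinks to a round point.

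I expect the last part to be the main obstacle. All the curvature pinching has already been produced by Corollaries~\ref{cor3:phi} and~\ref{cor3:psi}, hence by the computer-assisted Lemmas~\ref{lem3:phi} and~\ref{lem3:psi}; what remains is the (standard but non-trivial) verification that these estimates are precisely the hypotheses under which the Schulze--Schn\"urer argument runs, and that its higher-order estimates, its exclusion of degenerate sublimits, and its sphere-rigidity (uniqueness of the limit) step go through for the speed $F=H^{3}$.
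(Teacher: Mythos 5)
Your proposal is correct and follows essentially the same route as the paper: uniform pinching from Corollary~\ref{cor3:phi}, boundedness of $\psi_{H^3}$ from Corollary~\ref{cor3:psi}, the improving pinching estimate via Lemma~\ref{lem3:vanishing function}, and deferral to the Schulze--Schn\"urer blow-up argument for the final smooth convergence to a round point. The paper's proof is terser (it records only the $\psi_{H^3}$ bound, the resulting estimate $1\leq\lambda_{\max}/\lambda_{\min}\leq 1+C/H^2$, and the citation of \cite{fs:convexity}), while you spell out the rescaling, compactness, and rigidity steps explicitly; the substance is the same.
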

\begin{proof}
  We closely follow proof of the corresponding \cite[Theorem A.1.]{fs:convexity}
  by F. Schulze and O. Schn\"urer. \\
  
  By \cite[Theorem 1.1]{fs:convexity} the surfaces $M_t$ become immediately 
  strictly convex for $t>0$. Now choose a sufficiently small $0<\epsilon<T$
  such that the $H^3$-flow is smooth and strictly convex on the interval
  $(\epsilon,T)$.
  Thus the quantity $\psi_{H^3}$ is well-defined on this interval,
  and bounded from above by Corollary \ref{cor3:psi}.
  By Lemma \ref{lem3:vanishing function} this implies
  \begin{align*}
    1 \leq \frac{\lambda_{\text{max}}}{\lambda_{\text{min}}} \leq 1 + \frac{C}{H^2}
  \end{align*}
  on $\left(M_t\right)_{(\epsilon<t<T)}$. 
  
  Now the proof follows analogously to the proof of \cite[Theorem 1.2]{fs:convexity}.
\end{proof}

\section{$|A|^2$-flow}

A result similar to our main Theorem \ref{thm3:H3} holds for the normal velocity $F=|A|^2$ and $3$-pinched hypersurfaces. For a proof consider
\begin{align*}
  \phi_{|A|^2} =&\, \frac{(a^2+b^2+c^2)(a\,b+a\,c+b\,c)^2}{(a\,b\,c)^2} \\
  \text{and}\quad \psi_{|A|^2} =&\, \frac{(a+b+c)^2\left( (a-b)^2 + (a-c)^2 + (b-c)^2 \right)}{a\,b\,c},
\end{align*}
and O. Schn\"urer \cite{os:surfacesA2}.
As in chapter on $H^3$-flow using $[\textsc{cp}]$ we obtain
\begin{lemma}[$|A|^2$-flow]\label{lem3:A2}
  Let $\left(M_t\right)_{0\leq t < T}$ be a maximal solution of the $|A|^2$-flow, 
  where $M_0$ is $3$-pinched. 
  Then we have that 
  \begin{align*}
    \max_{M_t}\,\psi_{|A|^2}
  \end{align*}
  is non-increasing in time.
\end{lemma}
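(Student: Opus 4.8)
The plan is to mirror the structure of the $H^3$-flow argument from Section~5, replacing the pair $(\phi_{H^3},\psi_{H^3})$ by $(\phi_{|A|^2},\psi_{|A|^2})$ and the $2$-pinching cone by the $3$-pinching cone. The backbone is the linear operator $L$ of Definition~\ref{def3:linear operator} together with the closed-form expressions for $\vec{C_w}$, $\vec{E_w}$, $M^\vec{R_w}$, $M^\vec{S_w}$, $M^\vec{T_w}$ from Lemma~\ref{lem3:critical point} and Corollary~\ref{cor3:critical point}, which apply to any symmetric function $w$ of the principal curvatures; we instantiate them first with $w=\phi_{|A|^2}$ and then with $w=\psi_{|A|^2}$. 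Throughout, $L$ acts on $w$ through $F=|A|^2=a^2+b^2+c^2$, so all the $F_a$, $F_{ab}$ etc.\ that enter the formulas are the (very simple) derivatives of $|A|^2$.

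First I would set up the relevant homogeneous cones in the positive orthant and their radial projections to the slice $\{a+b+c=1\}$, exactly as in Lemma~\ref{lem3:phi}: the $3$-pinching cone $\Sc_{C_3}=\{\lambda_i/\lambda_j\le 3\}$, the sublevel set $\Sc_{h'}=\{0<\phi_{|A|^2}\le h'\}$ for an appropriate threshold $h'$, and $\Sc_{L\phi_{|A|^2}}=\{L\phi_{|A|^2}\le 0\}$, $\Sc_{L\psi_{|A|^2}}=\{L\psi_{|A|^2}\le 0\}$, where non-positivity of $L$ is checked through the sufficient conditions of Remark~\ref{rem3:sufficient conditions} (non-positivity of $\vec{C_w}$ and $\vec{E_w}$, and the trace/determinant conditions on $M^\vec{R_w}$, $M^\vec{S_w}$, $M^\vec{T_w}$). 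The core step is then to verify, using $[\textsc{cp}]$ and its Monte-Carlo routine, the chain of inclusions
\begin{align*}
  \Sc_{C_3}\subset \Sc_{h'}\subset \Sc_{L\phi_{|A|^2}}
  \qquad\text{and}\qquad
  \Sc_{C_3}\subset \Sc_{h'}\subset \Sc_{L\psi_{|A|^2}}.
\end{align*}
The first chain, via the maximum principle, shows that $\phi_{|A|^2}\le h'$ is preserved under the $|A|^2$-flow once $M_0$ is $3$-pinched (the analogue of Corollary~\ref{cor3:phi}); the second chain then gives $L\psi_{|A|^2}\le 0$ at any positive critical point of $\psi_{|A|^2}$ on the region $\{\phi_{|A|^2}\le h'\}$ that the flow actually occupies, and the maximum principle yields that $\max_{M_t}\psi_{|A|^2}$ is non-increasing, which is the claim.

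There is one reduction worth isolating before the computer step: because $\lambda_i/\lambda_j$, $\phi_{|A|^2}$, $\psi_{|A|^2}$ and $L$ applied to either are all homogeneous in $(a,b,c)$ (here one uses that $F=|A|^2$ is $2$-homogeneous and that the formulas of Lemma~\ref{lem3:critical point} respect this scaling), it suffices to carry out all set computations on the compact slice $\{a+b+c=1\}\cap\R^3_+$ under the projection $\pi$, exactly as in Lemma~\ref{lem3:phi}. This keeps the Monte-Carlo sampling on a bounded region. One also needs the standard facts, quoted in the proof of Theorem~\ref{thm3:H3} from \cite{fs:convexity}, that the flow becomes instantaneously strictly convex, so that $\psi_{|A|^2}$ is well-defined and smooth on $(\epsilon,T)$ and the maximum principle applies.

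The main obstacle is the same as in the $H^3$ case: establishing the inclusion $\Sc_{h'}\subset\Sc_{L\psi_{|A|^2}}$, i.e.\ that on the (relatively large) $3$-pinching region the three $2\times2$ matrices $M^\vec{R_{\psi}}$, $M^\vec{S_{\psi}}$, $M^\vec{T_{\psi}}$ are negative semi-definite and $\vec{C_\psi}=0$ (automatic, since $\psi_{|A|^2}$ is a vanishing function in the sense of Definition~\ref{def3:vanishing function} and Example~\ref{exm3:vanishing function}), $\vec{E_\psi}\le 0$. Compared with $H^3$, two things change and must be watched: the pinching constant is $3$ rather than $2$, so the region is wider and the matrix inequalities are tighter, which may force a different (and possibly non-obvious) choice of the threshold $h'$ for $\phi_{|A|^2}$; and the algebraic form of $\phi_{|A|^2}$ — note it is not scale-normalized the way $\phi_{H^3}$ is, being $6$-homogeneous divided by nothing, so one should instead track the homogeneous-degree-zero quantity $\phi_{|A|^2}/(a+b+c)^{?}$ or equivalently work with $\pi$ as above — requires care in identifying the correct sublevel threshold that both contains $\Sc_{C_3}$ and lies inside $\Sc_{L\psi_{|A|^2}}$. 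Once a valid $h'$ is pinned down, the remaining work is the (lengthy but routine) symbolic differentiation to form the entries $m_{11}$, $m_{12}$, $m_{22}$ of each matrix from Corollary~\ref{cor3:critical point} with $F=|A|^2$, followed by the numerical verification of $\tr M\le 0$ and $m_{12}^2-m_{11}m_{22}\le 0$ over the sampled slice, exactly the division of labor described in Section~5.
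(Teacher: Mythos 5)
Your overall plan is the same as the paper's: the paper gives no detailed proof of Lemma~\ref{lem3:A2} and simply asserts that the $[\textsc{cp}]$-based strategy of the $H^3$ chapter transfers to $F=|A|^2$, with the $3$-pinching cone and the pair $(\phi_{|A|^2},\psi_{|A|^2})$ in place of $(\phi_{H^3},\psi_{H^3})$, followed by the maximum principle. That outline is correct, and your identification of the inclusions $\Sc_{C_3}\subset\Sc_{h'}\subset\Sc_{L\phi_{|A|^2}}$ and $\Sc_{C_3}\subset\Sc_{h'}\subset\Sc_{L\psi_{|A|^2}}$ as the two computer-checked steps, together with the reduction to the slice $\{a+b+c=1\}$ and the trace/determinant criterion of Remark~\ref{rem3:sufficient conditions}, matches the paper.

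However, two concrete claims in your write-up are wrong. First, you assert that $\vec{C_{\psi_{|A|^2}}}=0$ is automatic because $\psi_{|A|^2}$ is the vanishing function of Example~\ref{exm3:vanishing function}. It is not: the paper's
\begin{align*}
  \psi_{|A|^2}=\frac{(a+b+c)^2\bigl((a-b)^2+(a-c)^2+(b-c)^2\bigr)}{a\,b\,c}
\end{align*}
is $1$-homogeneous, whereas the Example~\ref{exm3:vanishing function} quantity with $F=|A|^2$ would be $\bigl(\sum_{i<j}(\lambda_i-\lambda_j)^2/(\lambda_i\lambda_j)^2\bigr)(|A|^2)^2$, which is $2$-homogeneous. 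They are different functions, and a direct evaluation at $(a,b,c)=(2,1,1)$ gives $\vec{C_{\psi_{|A|^2}}}=-160\neq 0$ (using $\vec{C_w}=mQw+(1-k)F(a^2w_a+b^2w_b+c^2w_c)$ with $m=1$, $k=2$, $Q=2(a^3+b^3+c^3)$). So $\psi_{|A|^2}$ is not a vanishing function in the sense of Definition~\ref{def3:vanishing function}; you must include the verification $\vec{C_{\psi_{|A|^2}}}\le 0$ on the relevant set as part of the Monte-Carlo check, rather than dismissing it as automatic. Second, your remark that $\phi_{|A|^2}$ is ``$6$-homogeneous divided by nothing'' and hence not scale-normalized is incorrect: $\phi_{|A|^2}=\frac{(a^2+b^2+c^2)(ab+ac+bc)^2}{(abc)^2}$ has numerator of degree $6$ and denominator of degree $6$, so it is already degree-$0$ homogeneous and the radial projection $\pi$ applies without modification. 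Neither error touches the basic structure of the argument, but the first one must be fixed: dropping $\vec{C_{\psi_{|A|^2}}}$ from the list of quantities whose sign is checked would leave a genuine gap.
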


\section{Gauss curvature flow}

A result similar to our main Theorem \ref{thm3:H3} holds for the normal velocity $F=K$ and $2$-pinched hypersurfaces. For a proof consider
\begin{align*}
  \phi_K =&\, \frac{(a-b)^2 + (a-c)^2 + (b-c)^2}{a^2+b^2+c^2} \umbruch \\
  \text{and}\quad \psi_K =&\, \left( \frac{(a-b)^2}{(a\,b)^2} + \frac{(a-c)^2}{(a\,c)^2} + \frac{(b-c)^2}{(b\,c)^2} \right) \left(K\right)^2,
\end{align*}
and B. Chow \cite{bc:deforming}.
As in chapter on $H^3$-flow using $[\textsc{cp}]$ we obtain
\begin{lemma}[Gauss curvature flow]\label{lem3:K}
  Let $\left(M_t\right)_{0\leq t < T}$ be a maximal solution of the Gauss curvature flow, 
  where $M_0$ is $2$-pinched. 
  Then we have that 
  \begin{align*}
    \max_{M_t}\,\psi_K
  \end{align*}
  is non-increasing in time.
\end{lemma}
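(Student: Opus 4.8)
The plan is to follow the strategy of the $H^3$-flow verbatim, replacing the normal velocity $F = H^3$ by $F = K$ and the pair $\phi_{H^3}, \psi_{H^3}$ by $\phi_K, \psi_K$. First I would record that, by B. Chow \cite{bc:deforming}, a closed strictly convex hypersurface stays strictly convex under the Gauss curvature flow, so that $\phi_K$ and $\psi_K$ are well-defined along $\left(M_t\right)_{0\leq t<T}$; moreover $\psi_K$ is a vanishing function for $F = K$ in the sense of Definition \ref{def3:vanishing function}, i.e.\ its constant term $\vec{C_v}$ vanishes identically (compare Example \ref{exm3:vanishing function} and Remark \ref{rem3:vanishing function}), which is exactly why this particular combination is the right auxiliary quantity.

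Next I would establish the analogues of Lemma \ref{lem3:phi} and Corollary \ref{cor3:phi}: there is a constant $h>0$, to be pinned down by $[\textsc{cp}]$, such that the $2$-pinched cone $\Sc_{C_2}$ is contained in $\Sc_h := \{(a,b,c)\in\R^3_{+} : 0<\phi_K\leq h\}$ and such that $L\phi_K \leq 0$ on $\Sc_h$. Concretely, at a critical point of $\phi_K$ one applies Lemma \ref{lem3:critical point} and Corollary \ref{cor3:critical point} to write $L\phi_K$ in the form $\vec{C_w} + \vec{E_w}\,h_{12;3}^2$ plus the three quadratic forms $\vec{x_i}^\top M\,\vec{x_i}$ of Corollary \ref{cor3:critical point}, and then invokes the sufficient conditions of Remark \ref{rem3:sufficient conditions}: non-positivity of $\vec{C_w}$ and $\vec{E_w}$ together with $\tr M\leq 0$ and $\det M\geq 0$ for each of the three $2\times 2$ matrices. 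These are rational inequalities in $(a,b,c)$ which, by homogeneity, it suffices to check on the slice $\{a+b+c=1\}$, and this is what the Monte-Carlo sampling in $[\textsc{cp}]$ does, exactly as in Lemma \ref{lem3:phi}. The maximum principle then upgrades $L\phi_K\leq 0$ to preservation of $\{\phi_K\leq h\}$ along the flow.

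With $\phi_K \leq h$ in hand, I would prove the analogue of Lemma \ref{lem3:psi}, namely $L\psi_K \leq 0$ at every critical point of $\psi_K$ lying in $\Sc_h$, which amounts to the chain $\Sc_{C_2}\subset\Sc_h\subset\Sc_{L\psi_K}$ with $\Sc_{L\psi_K} := \{(a,b,c)\in\R^3_{+} : L\psi_K\leq 0\}$; this too is decomposed via Corollary \ref{cor3:critical point} and verified with $[\textsc{cp}]$ through Remark \ref{rem3:sufficient conditions}. Since $\psi_K$ is a vanishing function, the constant term $\vec{C_w}$ for $w=\psi_K$ vanishes identically, so only the gradient terms $\vec{E_w}$ and the three matrices need to be controlled. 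The maximum principle, applied exactly as in Corollary \ref{cor3:psi}, then shows that $\max_{M_t}\,\psi_K$ is non-increasing in time, which is the assertion.

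The main obstacle is the same as for the $H^3$-flow: the non-positivity of $L\phi_K$ and $L\psi_K$ on the relevant sets is a genuine multivariable rational inequality in $(a,b,c)$ with no obvious closed-form proof, so one relies on the numerical certification by $[\textsc{cp}]$. A secondary difficulty is choosing the threshold $h$ for $\phi_K$ so that both $\Sc_{C_2}\subset\Sc_h$ and $\Sc_h\subset\Sc_{L\psi_K}$ hold simultaneously, and handling the degenerate loci $a=b$, $a=c$, $b=c$, where the difference quotients appearing in $\vec{E_w}$, $\vec{R_w}$, $\vec{S_w}$, $\vec{T_w}$ must be read as derivatives in the sense of Lemma \ref{lem3:second derivatives}.
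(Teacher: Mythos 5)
Your proposal is correct and follows exactly the route the paper intends: the paper's own ``proof'' is just the phrase ``As in chapter on $H^3$-flow using $[\textsc{cp}]$ we obtain,'' and you have spelled out precisely what that means---verifying $\Sc_{C_2}\subset\Sc_h\subset\Sc_{L\phi_K}$ and $\Sc_h\subset\Sc_{L\psi_K}$ via Lemma \ref{lem3:critical point}, Corollary \ref{cor3:critical point}, and Remark \ref{rem3:sufficient conditions} with Monte-Carlo certification, then applying the maximum principle twice. Your observation that $\psi_K$ being a vanishing function kills the constant term $\vec{C_{\psi_K}}$, so only $\vec{E_w}$ and the three matrices need checking, is a correct and useful refinement that the paper leaves implicit.
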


\section{Outlook}

Our aim is to show convergence to a round point without pinching requirements 
using vanishing functions in arbitrary dimensions.
Instead of splitting the linear operator $L$ into constant terms and gradient terms
we intend to work with integral estimates similar to G. Huisken \cite{gh:flow}.
This way we seek to prove convergence to a round point for contracting normal velocities, 
including powers of the Gauss curvature, $K$, of the mean curvature, $H$, 
and of the norm of the second fundamental form, $|A|$.

\section{Appendix}

Some of the Lemmas leading up to the proof our main Theorem \ref{thm3:H3} rely
on the computer program $[\textsc{cp}]$. First we compute the linear operator $L$ 
for the corresponding quantities $\phi$ and $\psi$.
Next we use a Monte-Carlo method to compute the sets $\Sc_{C_2}$, $\Sc_h$, $\Sc_{L\phi}$, and $\Sc_{L\psi}$.
Finally, we compute the two inclusions
\begin{align}\label{id:inclusions}
  \begin{split}
    \Sc_{C_2} \subset&\, \Sc_h \subset \Sc_{L\phi}, \\
    \Sc_{C_2} \subset&\, \Sc_h \subset \Sc_{L\psi}. 
  \end{split}
\end{align}

For the convenience of the reader, we include the source code of $[\textsc{cp}]$ 
in three different programming languages, namely for the computer algebra systems Mathematica, Sage, and Maple.
The first part of the appendix is the Mathematica program, the second part is the Sage program, and the third part is the Maple program.

In the first part we also visualize the two inclusions \eqref{id:inclusions}. \\

At \url{www.arxiv.org} we can only submit this article without 
the computer program $[\textsc{cp}]$. 
To download this article with the computer program $[\textsc{cp}]$ please go to
\url{www.martinfranzen.de}.

\bibliographystyle{amsplain} 
\def\weg#1{} \def\unterstrich{\underline{\rule{1ex}{0ex}}} \def\cprime{$'$}
  \def\cprime{$'$} \def\cprime{$'$} \def\cprime{$'$}
\providecommand{\bysame}{\leavevmode\hbox to3em{\hrulefill}\thinspace}
\providecommand{\MR}{\relax\ifhmode\unskip\space\fi MR }
\providecommand{\MRhref}[2]{%
  \href{http://www.ams.org/mathscinet-getitem?mr=#1}{#2}
}
\providecommand{\href}[2]{#2}

\end{document}